\newtheorem{theorem}{Theorem}[section]
\newtheorem{corollary}[theorem]{Corollary}
\newtheorem{observation}[theorem]{Observation}
\theoremstyle{definition}
\theoremstyle{remark}
\newcommand{\ds}{\displaystyle}
\newcommand{\R}{{\mathbb R}}
\newcommand{\E}{{\mathbb E}}
\newcommand{\lp}{\left(}
\newcommand{\rp}{\right)}
\newcommand{\lb}{\left[}
\newcommand{\rb}{\right]}
\begin{document}

\setcounter{page}{1}

\title[Integer Programming Formulations and Probabilistic Bounds...]{Integer Programming Formulations and Probabilistic Bounds for Some Domination Parameters}

\author{Mhelmar A. Labendia$^{\rm 1,\ast}$, Clifford R. Pornia$^{\rm 2}$}

\address{$^{1,2}$ Department of Mathematics and Statistics, Mindanao State University-Iligan Institute of Technology, 9200 Iligan City, Philippines.}
\email{mhelmar.labendia@g.msuiit.edu.ph; clifford.pornia@g.msuiit.edu.ph}



\subjclass[2010]{Primary 05C69, Secondary 05C85, 90C05.}

\keywords{hop domination, 2-step domination, restrained domination, IP formulation, probabilistic bounds.}


\begin{abstract}
In this paper, we further study the concepts of hop domination and 2-step domination and introduce the concepts of restrained hop domination, total restrained hop domination, 2-step restrained domination, and total 2-step restrained domination in graphs. We then construct integer programming formulations and present probabilistic upper bounds for these domination parameters.
\end{abstract} 
\maketitle

\section{Introduction and Preliminaries}
Domination in graphs is one of the extensively studied concept in graph theory. This concept has historical roots as early as 1850s when European chess enthusiast studied the problem of dominating queens. The mathematical study of dominating sets has become an interest to numerous authors, in which the concept has also been used for many different applications, such as wireless network topology design \cite{Yu}, wireless sensor network \cite{Asgarnezhad}, hoc network \cite{Wu}, and many others. Different modifications related to domination in graphs have been studied by several authors, see \cite{Anusha, Mollejon, Murugan, Prabhavathy, Swaminathan, Varghese}.
The concept of total domination in graphs has also been introduced in \cite{Cockayne}. One may refer to \cite{Haynes, Haynes2} for detailed survey on domination parameters and \cite{Henning3} for thorough discussions on total domination.

Let $G = (V(G), E(G))$ be a simple undirected graph. A set $S\subseteq V(G)$ is a dominating set of $G$ if every vertex outside $S$ is adjacent to a vertex in $S$. The domination number of $G$ is the smallest cardinality of a dominating set of $G$, and is denoted by $\gamma(G)$. If every vertex in $G$ is adjacent to a vertex in $S$, then we say that $S$ is a total dominating set of $G$. Similarly, the total domination number of $G$, denoted by $\gamma_t(G)$, is the smallest cardinality of a total dominating set of $G$.

In 1999, Domke et al. \cite{Domke} initiated the study of restrained dominating set. This notion was further examined in \cite{Zelinka}. A set $S\subseteq V(G)$ is a restrained dominating set of $G$ if every vertex outside $S$ is adjacent to a vertex in $S$ as well as another vertex outside $S$. The restrained domination number of $G$ is the smallest cardinality of a restrained dominating set of $G$, and is denoted by $\gamma_r(G)$.

A new domination parameter called 2-step domination in graphs was introduced by Chartrand et al. \cite{Chartrand} and further investigated in \cite{Caro, Dror, Hersh, Zhao}. Analogous to 2-step domination, the concept of hop domination in graphs was introduced by Natarajan and Ayyaswamy \cite{Natarajan}, which was further studied by some authors, see \cite{Henning1, Henning2}. For any two vertices $u$ and $v$ of $G$, the symbol $d_G(u,v)$ or simply $d(u,v)$, is the length of a shortest path connecting vertices $u$ and $v$ in $G$, which is also referred as the distance between $u$ and $v$. The degree of a vertex $i$ in a graph $G$, denoted by $\deg_G(i)$ or $d_G(i)$, is the number of vertices adjacent to $i$ and the smallest degree among the vertices of $G$ is denoted by $\delta(G)$, or simply $\delta$. The set of vertices adjacent to $i$ in $G$ is denoted by $N(i)$ and the set of vertices at a distance 2 from $i$ in $G$ is denoted by $N_2(i)$.
 The hop degree of a vertex $i$ in a graph $G$, denoted by $\deg_h(i)$ or $d_h(i)$, is the number of vertices at distance $2$ from $i$ in $G$. The smallest hop-degree among the vertices of $G$ is denoted by $\delta_h(G)$, or simply $\delta_h$. A set $S\subseteq V(G)$ is a hop dominating set of $G$ if for every $i\in V(G)\backslash S$, there exists $j\in S$ such that $d_G(i, j)=2$. The hop domination number of $G$ is the smallest cardinality of a hop dominating set of $G$ and is denoted by $\gamma_h(G)$. A set $S\subseteq V(G)$ is a $2$-step dominating set of $G$ if for every $i\in V(G)$, there exists $j\in S$ such that $d_G(i, j)=2.$  The $2$-step domination number of $G$, denoted by $\gamma_{2\text{step}}(G)$, is the smallest cardinality of a $2$-step dominating set of $G$.

In this paper, the concept of restrained hop dominating set, total restrained hop dominating set, 2-step restrained dominating set, and
total 2-step restrained dominating set in graphs will be introduced. An integer programming (IP) formulation will be constructed for these newly
defined domination parameters and sharp upper bounds will be provided using probabilistic methods.

\section{IP Formulation}
\subsection{Hop domination and 2-step domination problems}
From now onwards, let $G = (V,E)$ be a simple undirected graph with
$V=\{1,2,\ldots,n\}$. Decision variable $x_i$ indicates whether vertex $i$ belongs to a hop dominating set $S\subseteq V$, i.e.,
\[ x_{i}= \left\{ \begin{array}{ll}
         1 & \mbox{if $i\in S$}\\
        0 & \mbox{otherwise}.
        \end{array} \right. 
\] 
Define
\[ a_{ij}= \left\{ \begin{array}{ll}
         1 & \mbox{if $i=j$ or $d(i,j)=2$}\\
        0 & \mbox{otherwise}.
        \end{array} \right. 
\] 
An IP formulation for the hop dominating set problem (HDP) can be constructed as:
\begin{align}
 \min \sum_{i=1}^n x_i &&  \label{HDP1}
 \end{align}
\qquad subject to:
\begin{align}
  \sum_{j=1}^n a_{ij}x_j\geq 1,&&  \forall i\in  V \label{HDP2}\\
  x_i\in\{0,1\},&&  \forall i\in  V \label{HDP3}
\end{align} 

\begin{theorem}
The optimal solution of the IP formulation for the HDP is equal to the hop domination number of $G$.
\end{theorem}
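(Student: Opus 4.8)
The plan is to show that the feasible $0$-$1$ solutions of the IP correspond bijectively to hop dominating sets, so that minimizing $\sum_i x_i$ computes exactly $\gamma_h(G)$. I would argue via the two standard inclusions: every hop dominating set yields a feasible point with matching objective value, and every feasible point yields a hop dominating set with the same cardinality. Since the IP minimizes the objective over all feasible points, the optimal value equals the minimum cardinality over all hop dominating sets, which is $\gamma_h(G)$ by definition.

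First I would fix the correspondence between a $0$-$1$ vector $x$ and the set $S = \{ i \in V : x_i = 1 \}$, noting that $\sum_{i=1}^n x_i = |S|$, so the objective value always equals the cardinality of the associated set. The content is entirely in showing that constraint \eqref{HDP2} holds for all $i \in V$ if and only if $S$ is a hop dominating set. The key observation is the meaning of the left-hand side: $\sum_{j=1}^n a_{ij} x_j = \sum_{j \in S} a_{ij}$ counts the number of $j \in S$ with either $j = i$ or $d(i,j) = 2$, i.e.\ the number of elements of $S$ that either equal $i$ or are at distance $2$ from $i$.

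Next I would verify the two directions of this equivalence. If $S$ is hop dominating, take any $i \in V$: either $i \in S$, in which case the term $j = i$ contributes $a_{ii} = 1$, or $i \notin S$, in which case the hop domination property supplies some $j \in S$ with $d(i,j) = 2$, giving $a_{ij} = 1$; in both cases the sum in \eqref{HDP2} is at least $1$, so $x$ is feasible. Conversely, suppose $x$ is feasible and let $i \in V \setminus S$; then $a_{ii} = 1$ but $x_i = 0$, so the term $j = i$ contributes nothing, and the inequality $\sum_{j \in S} a_{ij} \ge 1$ forces some $j \in S$, necessarily $j \neq i$, with $d(i,j) = 2$, which is precisely the hop domination condition for $i$. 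Hence $S$ is hop dominating.

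I do not anticipate a genuine obstacle here, as the result is a direct translation of the definition into the inequality system; the only point requiring care is the role of the diagonal entry $a_{ii} = 1$, which ensures that vertices already inside $S$ automatically satisfy \eqref{HDP2} and therefore imposes no spurious constraint on them, matching the fact that hop domination only constrains vertices outside $S$. After establishing the equivalence, I would conclude by taking the minimum over both sides: the optimal value of the IP is $\min\{ |S| : S \text{ is a hop dominating set}\} = \gamma_h(G)$, completing the proof.
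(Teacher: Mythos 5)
Your proof is correct: the bijection between $0$--$1$ feasible points and hop dominating sets, the verification of both directions of the equivalence for constraint \eqref{HDP2} (with the correct handling of the diagonal entry $a_{ii}=1$, which makes the constraint vacuous for $i\in S$ and forces a genuine hop neighbor $j\neq i$ in $S$ when $x_i=0$), and the final minimization step are all sound. The paper itself states this theorem without proof, evidently regarding it as routine; your argument is exactly the standard one that the authors left implicit, so there is nothing to compare beyond noting that you supplied the details they omitted.
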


Next, we construct an IP formulation for the 2-step dominating set problem (2SDP). Decision
variable $x_i$ indicates whether vertex $i$ belongs to a 2-step dominating set.
Since a 2-step dominating set is a special case of a hop dominating set, we just replace constraint \eqref{HDP2} with constraint \eqref{2SDP2}. 
An IP formulation for the 2SDP can be constructed as:
\begin{align}
 \min \sum_{i=1}^n x_i &&  \label{2SDP1}
 \end{align}
\qquad subject to:
\begin{align}
  \sum_{j=1,\ j\neq i}^n a_{ij}x_j\geq 1,&&  \forall i\in  V \label{2SDP2}\\
  x_i\in\{0,1\},&&  \forall i\in  V \label{2SDP3}
\end{align}

\begin{theorem}
The optimal solution of the IP formulation for the 2SDP is equal to the 2-step domination number of $G$.
\end{theorem}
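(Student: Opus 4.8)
The plan is to prove the equivalence exactly as one would for the HDP theorem: by exhibiting a correspondence between the feasible $0/1$ solutions of \eqref{2SDP1}--\eqref{2SDP3} and the 2-step dominating sets of $G$, under which the objective value of a solution equals the cardinality of the associated set. Since the optimal IP value is the minimum of $\sum_{i=1}^n x_i$ over feasible $x$, while $\gamma_{2\text{step}}(G)$ is the minimum cardinality over all 2-step dominating sets, establishing this correspondence immediately identifies the two minima.

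First I would associate to any feasible $x$ the set $S = \{i \in V : x_i = 1\}$ and check that $S$ is a 2-step dominating set. Fixing $i \in V$, constraint \eqref{2SDP2} reads $\sum_{j \neq i} a_{ij} x_j \geq 1$; as every summand is a nonnegative integer, some term is positive, so there is a $j \neq i$ with $a_{ij} = 1$ and $x_j = 1$. Because $j \neq i$, the equality $a_{ij} = 1$ forces $d(i,j) = 2$ (the alternative clause $i = j$ in the definition of $a_{ij}$ is ruled out), and $x_j = 1$ means $j \in S$. Hence every $i \in V$ has a vertex $j \in S$ at distance exactly $2$, which is precisely the defining property of a 2-step dominating set.

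Conversely, given a 2-step dominating set $S$, I would set $x_i = 1$ for $i \in S$ and $x_i = 0$ otherwise, and verify \eqref{2SDP2}: for each $i$ there is by hypothesis some $j \in S$ with $d(i,j) = 2$, and this $j$ satisfies $j \neq i$ and $a_{ij} = 1$, so its term contributes $1$ and the constraint holds. In both directions the objective $\sum_{i=1}^n x_i$ equals $|S|$, so the minimizers coincide.

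The argument is essentially routine; the only point I would check with care is the exclusion $j \neq i$ in \eqref{2SDP2}. In contrast to the HDP constraint \eqref{HDP2}, here the diagonal entry $a_{ii} = 1$ is deliberately dropped, and this is exactly what enforces the 2-step condition on \emph{all} of $V(G)$ rather than only on $V(G) \setminus S$: even a vertex already in $S$ must be 2-step dominated by some \emph{other} vertex of $S$ at distance $2$, and is not permitted to cover itself. Confirming that this exclusion matches the definition verbatim is the single step on which the whole equivalence hinges.
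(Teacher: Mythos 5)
Your proof is correct: the correspondence between feasible $0/1$ vectors and 2-step dominating sets, with the observation that the exclusion $j\neq i$ in \eqref{2SDP2} (together with $d(i,j)=2$ forcing $j\neq i$ anyway) makes the constraint enforce 2-step domination on all of $V(G)$ rather than only on $V(G)\setminus S$, is exactly the routine argument the paper leaves implicit, since it states this theorem without proof as a direct analogue of the HDP case. Nothing is missing.
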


\subsection{Restrained hop and total restrained hop domination problems}

A hop dominating set $S\subseteq V(G)$ is a \emph{restrained hop dominating set} of $G$ if for every $u\in V(G)\setminus S$, there exists $v\in V(G)\setminus S$ such that $d(u,v)=1$.
For the construction of an IP formulation for restrained hop dominating set problem (RHDP), decision
variable $x_i$ indicates whether vertex $i$ belongs to a restrained hop dominating set. Define
\[ b_{ij}= \left\{ \begin{array}{ll}
         -1 & \mbox{if $i=j$}\\
        1 & \mbox{if $(i,j)\in E(G)$}\\
        0 & \mbox{otherwise}.
        \end{array} \right. 
\] 
Following the techniques employed in \cite{Duraisamy}, an IP formulation for the RHDP can be constructed as:
\begin{align}
 \min \sum_{i=1}^n x_i &&  \label{RHDP1}
 \end{align}
\qquad subject to:
\begin{align}
  \sum_{j=1}^n a_{ij}x_j\geq 1,&&  \forall i\in  V \label{RHDP2}\\
  \sum_{j=1}^n b_{ij}x_j< \deg_G(i),&&  \forall i\in  V \label{RHDP3}\\
  x_i\in\{0,1\},&&  \forall i\in  V \label{RHDP4}
\end{align}

\begin{theorem}
The optimal solution of the IP formulation for the RHDP is equal to the restrained hop domination number of $G$.
\end{theorem}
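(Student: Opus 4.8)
The plan is to show that the set of feasible $\{0,1\}$-vectors of the IP \eqref{RHDP1}--\eqref{RHDP4} is precisely the collection of indicator vectors of restrained hop dominating sets of $G$; since the objective $\sum_{i=1}^n x_i$ equals the cardinality of the corresponding vertex set, minimizing over this feasible region returns exactly the restrained hop domination number. Concretely, to each $S\subseteq V$ I would associate the vector $x^S$ with $x^S_i=1$ iff $i\in S$, and conversely to each feasible $x$ the set $S=\{i\in V : x_i=1\}$; these are mutually inverse, so it suffices to verify that the correspondence carries feasible points to restrained hop dominating sets and back.

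First I would unpack constraint \eqref{RHDP2}. Since $a_{ii}=1$ and $a_{ij}=1$ exactly when $d(i,j)=2$, the left-hand side equals $x_i+\sum_{j:\,d(i,j)=2}x_j$. If $i\in S$ then $x_i=1$ and the inequality holds trivially; if $i\notin S$ then $x_i=0$ and the inequality forces $\sum_{j:\,d(i,j)=2}x_j\geq 1$, i.e. some $j\in S$ lies at distance $2$ from $i$. Thus \eqref{RHDP2} holds for all $i$ if and only if $S$ is a hop dominating set, exactly as in the HDP formulation.

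The heart of the argument is constraint \eqref{RHDP3}, which should encode the restrained condition. Expanding with $b_{ii}=-1$ and $b_{ij}=1$ for $j\in N(i)$ gives $\sum_{j=1}^n b_{ij}x_j=-x_i+\sum_{j\in N(i)}x_j$. Because every term is integer-valued, the strict inequality $<\deg_G(i)$ is equivalent to $\leq\deg_G(i)-1$. I would then split on membership of $i$: when $i\in S$ the $-1$ on the diagonal lowers the left-hand side to $\sum_{j\in N(i)}x_j-1\leq\deg_G(i)-1$, so the constraint is automatically satisfied and imposes nothing; when $i\notin S$ the left-hand side is $\sum_{j\in N(i)}x_j$, and the inequality $\sum_{j\in N(i)}x_j\leq\deg_G(i)-1=|N(i)|-1$ says precisely that not all neighbors of $i$ lie in $S$, i.e. some $v\in V\setminus S$ satisfies $d(i,v)=1$. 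This is exactly the restrained requirement for $i$.

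The step I expect to be the main (though modest) obstacle is getting the diagonal correction right: without the $-1$ term, constraint \eqref{RHDP3} would wrongly demand a non-$S$ neighbor even for vertices $i\in S$, to which the restrained condition does not apply. Verifying that the $-1$ exactly deactivates the constraint on $S$ while leaving it binding off $S$ is the crux. Combining the two analyses, a $\{0,1\}$-vector $x$ is feasible if and only if $S=\{i : x_i=1\}$ is simultaneously hop dominating and restrained, i.e. a restrained hop dominating set; since $\sum_{i=1}^n x_i=|S|$, the minimum value of the IP coincides with the minimum cardinality of such a set, which is the restrained hop domination number of $G$. Feasibility is never an issue, since $S=V$ always satisfies both \eqref{RHDP2} and \eqref{RHDP3}.
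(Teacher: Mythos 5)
Your proof is correct. The paper actually states this theorem without any proof, so there is nothing to diverge from: your argument — the bijection between $\{0,1\}$-vectors and vertex subsets, the unpacking of \eqref{RHDP2} into the hop domination condition, and the case analysis showing that the diagonal entry $b_{ii}=-1$ deactivates \eqref{RHDP3} for $i\in S$ while forcing $\sum_{j\in N(i)}x_j\leq \deg_G(i)-1$ (equivalently, a neighbor outside $S$) for $i\notin S$, using integrality to convert the strict inequality — is precisely the standard verification the formulation is built to satisfy, and your closing remarks on $|S|=\sum_i x_i$ and feasibility of $S=V$ complete it.
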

A 2-step dominating set $S\subseteq V(G)$ is a \emph{total restrained hop dominating set} of $G$ if for every $u\in V(G)\setminus S$, there exists $v\in V(G)\setminus S$ such that $d(u,v)=1$.
Since the total restrained hop dominating set is a special case for restrained
hop dominating set, we just replace constraint \eqref{RHDP2} by constraint \eqref{TRHDP2}.
Decision variable $x_i$ indicates whether vertex $i$ belongs to a total restrained hop dominating set. An IP formulation for the total restrained hop dominating set problem (TRHDP) can be constructed as:
\begin{align}
 \min \sum_{i=1}^n x_i &&  \label{TRHDP1}
 \end{align}
\qquad subject to:
\begin{align}
  \sum_{j=1, \ j\neq i}^n a_{ij}x_j\geq 1,&&  \forall i\in  V \label{TRHDP2}\\
  \sum_{j=1}^n b_{ij}x_j< \deg_G(i),&&  \forall i\in  V \label{TRHDP3}\\
  x_i\in\{0,1\},&&  \forall i\in  V \label{TRHDP4}
\end{align}

\begin{theorem}
The optimal solution of the IP formulation for the TRHDP is equal to the total restrained hop domination number of $G$.
\end{theorem}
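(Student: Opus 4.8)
The plan is to establish a value-preserving correspondence between the binary feasible solutions of the TRHDP and the total restrained hop dominating sets of $G$, so that the two minimization problems share the same optimum. Given a feasible $x\in\{0,1\}^n$, set $S=\{i\in V: x_i=1\}$; conversely, any $S\subseteq V$ yields its indicator vector $x$ with $x_i=1$ iff $i\in S$. In both directions the objective $\sum_{i=1}^n x_i$ equals $|S|$, so it suffices to show that $x$ satisfies \eqref{TRHDP2}--\eqref{TRHDP4} exactly when $S$ is a total restrained hop dominating set.

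First I would interpret constraint \eqref{TRHDP2}. Because the summation omits $j=i$, the coefficient $a_{ij}$ equals $1$ precisely when $d(i,j)=2$, so $\sum_{j\neq i}a_{ij}x_j=|\{j\in S: d(i,j)=2\}|$. Thus \eqref{TRHDP2} holds for every $i\in V$ if and only if every vertex of $G$ has a vertex of $S$ at distance $2$, i.e. $S$ is a 2-step dominating set. This is exactly the argument already used for the 2SDP, so it may simply be invoked.

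Next I would analyze the restraint constraint \eqref{TRHDP3}, which is identical to the one in the RHDP. Since $b_{ii}=-1$ and $b_{ij}=1$ for $j\in N(i)$, we have $\sum_{j=1}^n b_{ij}x_j=-x_i+|N(i)\cap S|$. As all quantities are integers, $<\deg_G(i)$ is the same as $\leq\deg_G(i)-1$. When $i\in S$ the inequality reads $|N(i)\cap S|\leq\deg_G(i)=|N(i)|$, which always holds; when $i\notin S$ it reads $|N(i)\cap S|\leq\deg_G(i)-1$, i.e. $N(i)\not\subseteq S$, meaning $i$ has a neighbour outside $S$. Hence \eqref{TRHDP3} holds for all $i$ exactly when every $u\in V\setminus S$ admits a vertex $v\in V\setminus S$ with $d(u,v)=1$, the restraint condition. (The degenerate case of an isolated vertex $i\notin S$ gives $0\leq -1$, which correctly forces every isolated vertex into $S$, as any restrained set must.)

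Combining the two equivalences, a binary vector $x$ is feasible for \eqref{TRHDP1}--\eqref{TRHDP4} if and only if its support $S$ is simultaneously a 2-step dominating set and satisfies the restraint condition, i.e. $S$ is a total restrained hop dominating set. Since the objective equals $|S|$ under the correspondence, minimizing $\sum_{i=1}^n x_i$ over feasible $x$ coincides with minimizing $|S|$ over total restrained hop dominating sets, so the optimal IP value equals the total restrained hop domination number. I expect the only delicate point to be the careful case analysis of the strict inequality \eqref{TRHDP3}, in particular verifying that it is vacuous for $i\in S$ and that it handles isolated vertices correctly, while the remainder is a routine assembly of the 2SDP and RHDP arguments.
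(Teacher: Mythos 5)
Your proof is correct and complete: the indicator-vector correspondence, the reading of \eqref{TRHDP2} (with the $j\neq i$ omission) as exactly 2-step domination, and the integrality-based case split on \eqref{TRHDP3} showing it is vacuous for $i\in S$ and equivalent to $N(i)\not\subseteq S$ for $i\notin S$, together give the value-preserving bijection the theorem needs; the paper itself states this result without any proof (deferring to the techniques of Duraisamy and Esakkimuthu), and your argument is precisely the intended one. The only negligible remark is that your isolated-vertex check of \eqref{TRHDP3} is moot, since an isolated vertex already renders \eqref{TRHDP2} infeasible (it has no vertex at distance $2$), consistent with no total restrained hop dominating set existing in that case.
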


\subsection{2-step restrained and total 2-step restrained domination problems}
Finally, we construct an IP formulation for the 2-step restrained dominating set problem (2SRDP). If $G$ is a graph, then we denote by $\text{Dist} (G;2)$ the graph consisting of the vertex set $V$ and edge set $\{uv: d_G(u,v)=2\}$.

A hop dominating set $S\subseteq V(G)$ is a \emph{2-step restrained dominating set} of $G$ if for every $u\in V(G)\setminus S$, there exists $v\in V(G)\setminus S$ such that $d(u,v)=2$. Decision variables $x_i$ indicates whether a vertex $i$ belongs to a 2-step restrained dominating set. Define
\[ c_{ij}= \left\{ \begin{array}{ll}
         -1 & \mbox{if $i=j$}\\
        1 & \mbox{if $d(i,j)=2$}\\
        0 & \mbox{otherwise}.
        \end{array} \right. 
\] 
An IP formulation for the 2SRDP can be constructed as:
\begin{align}
 \min \sum_{i=1}^n x_i &&  \label{2SRDP1}
 \end{align}
\qquad subject to:
\begin{align}
  \sum_{j=1}^n a_{ij}x_j\geq 1,&&  \forall i\in  V \label{2SRDP2}\\
  \sum_{j=1}^n c_{ij}x_j< \deg_{\text{Dist}(G;2)}(i),&&  \forall i\in  V \label{2SRDP3}\\
  x_i\in\{0,1\},&&  \forall i\in  V \label{2SRDP4}
\end{align}

\begin{theorem}
The optimal solution of the IP formulation for the 2SRDP is equal to the 2-step restrained domination number of $G$.
\end{theorem}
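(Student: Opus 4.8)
The plan is to establish a one-to-one correspondence between feasible $\{0,1\}$-vectors $x$ of the IP \eqref{2SRDP1}--\eqref{2SRDP4} and 2-step restrained dominating sets of $G$, under the identification $S = \{i\in V : x_i = 1\}$, and then to observe that the objective value $\sum_{i=1}^n x_i$ equals $|S|$, so that the two minima coincide. The proof is essentially a case analysis verifying that the constraints encode exactly the defining properties.

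First I would record the key identity that $\deg_{\text{Dist}(G;2)}(i)$ is precisely the number $|N_2(i)|$ of vertices at distance $2$ from $i$ in $G$, since by definition the edges of $\text{Dist}(G;2)$ join exactly the pairs of vertices at distance $2$ in $G$; this is the integer appearing on the right-hand side of \eqref{2SRDP3}. Next I would treat the two families of constraints separately. For constraint \eqref{2SRDP2}, because $a_{ii}=1$ and $a_{ij}=1$ exactly when $d(i,j)=2$, the inequality $\sum_j a_{ij}x_j\geq 1$ states that either $i\in S$ or some $j\in S$ has $d(i,j)=2$; ranging over all $i$, this is precisely the assertion that $S$ is a hop dominating set.

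For constraint \eqref{2SRDP3} I would expand $\sum_j c_{ij}x_j = -x_i + \sum_{j:\,d(i,j)=2} x_j$ and split into two cases. When $x_i=1$ the inequality reduces to $\sum_{j:\,d(i,j)=2} x_j \leq |N_2(i)|$, which holds automatically since the sum has $|N_2(i)|$ terms each at most $1$; thus the constraint is vacuous for vertices of $S$. When $x_i=0$, integrality converts the strict inequality $\sum_{j:\,d(i,j)=2} x_j < |N_2(i)|$ into the statement that at least one vertex $v$ with $d(i,v)=2$ satisfies $x_v=0$, that is, $v\notin S$; ranging over all $i\notin S$ this is exactly the 2-step restrained condition that every $u\in V\setminus S$ has a vertex at distance $2$ lying outside $S$. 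In particular a vertex with $|N_2(i)|=0$ is forced into $S$, consistent with its being impossible to hop-dominate from the outside.

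Combining the two analyses shows that $x$ is feasible if and only if $S$ is simultaneously a hop dominating set and satisfies the restrained condition, i.e. $S$ is a 2-step restrained dominating set; since $\sum_i x_i = |S|$, the optimal value of the IP equals the minimum cardinality of such a set, which is by definition the 2-step restrained domination number of $G$. The only step requiring genuine care is the case analysis of \eqref{2SRDP3}: one must invoke the integrality of the entries together with the strictness of the inequality to see that the constraint is inactive precisely on $S$ and encodes the restrained requirement precisely on $V\setminus S$.
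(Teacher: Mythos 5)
Your proof is correct, and since the paper states this theorem without any proof (deferring implicitly to the technique of Duraisamy and Esakkimuthu cited for the RHDP formulation), your argument is exactly the intended standard one: a bijective correspondence between feasible $\{0,1\}$-vectors and 2-step restrained dominating sets via $S=\{i : x_i=1\}$, with \eqref{2SRDP2} encoding hop domination and the case analysis of \eqref{2SRDP3} using integrality plus strictness to show the constraint is vacuous on $S$ and equivalent to the restrained condition off $S$. Your handling of the boundary case $\deg_{\text{Dist}(G;2)}(i)=|N_2(i)|=0$, where both \eqref{2SRDP2} and \eqref{2SRDP3} force $x_i=1$, is also correct and is the one detail such proofs typically omit.
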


A 2-step dominating set $S\subseteq V(G)$ is a \emph{total 2-step restrained dominating set} of $G$ if for every $u\in V(G)\setminus S$, there exists $v\in V(G)\setminus S$ such that $d(u,v)=2$. Decision variable $x_i$ indicates whether a vertex $i$ belongs to a total 2-step restrained dominating set. Replacing constraint \eqref{2SRDP2} with constraint \eqref{T2SRDP2},
an IP formulation for the total 2-step restrained dominating set problem (T2SRDP) can be constructed as:
\begin{align}
 \min \sum_{i=1}^n x_i &&  \label{T2SRDP1}
 \end{align}
\qquad subject to:
\begin{align}
  \sum_{j=1, \ j\neq i}^n a_{ij}x_j\geq 1,&&  \forall i\in  V \label{T2SRDP2}\\
  \sum_{j=1}^n c_{ij}x_j< \deg_{\text{Dist}(G;2)}(i),&&  \forall i\in  V \label{T2SRDP3}\\
  x_i\in\{0,1\},&&  \forall i\in  V \label{T2SRDP4}
\end{align}

\begin{theorem}
The optimal solution of the IP formulation for the T2SRDP is equal to the total 2-step restrained domination number of $G$.
\end{theorem}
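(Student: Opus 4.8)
The plan is to establish a correspondence between feasible binary vectors of the T2SRDP and total 2-step restrained dominating sets of $G$ that preserves the objective value, so that minimizing $\sum_{i=1}^n x_i$ returns precisely the minimum cardinality of such a set. Given a feasible vector $(x_i)$ I would set $S = \{i \in V : x_i = 1\}$, and conversely, given such a set $S$ I would set $x_i = 1$ for $i \in S$ and $x_i = 0$ otherwise. In both directions $\sum_{i=1}^n x_i = |S|$, so it suffices to show that feasibility is equivalent to $S$ being a total 2-step restrained dominating set.

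First I would verify that constraint \eqref{T2SRDP2} encodes the (total) 2-step domination condition. Since the sum omits $j = i$, and $a_{ij} = 1$ precisely when $d(i,j) = 2$ once the case $i = j$ is excluded, the inequality $\sum_{j \neq i} a_{ij} x_j \geq 1$ holds for every $i \in V$ exactly when each vertex $i$ admits some $j \in S$ with $d(i,j) = 2$. This is the requirement that $S$ be a 2-step dominating set in the sense defined in the preliminaries.

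Next I would analyze constraint \eqref{T2SRDP3}, which should capture the restrained condition. Expanding the left-hand side via the definition of $c_{ij}$ gives $-x_i + \sum_{j \in N_2(i)} x_j$, while the right-hand side equals $\deg_{\text{Dist}(G;2)}(i) = |N_2(i)|$. I would split into two cases according to whether $i \in S$. If $x_i = 1$, the inequality reduces to $\sum_{j \in N_2(i)} x_j \leq |N_2(i)|$, which holds automatically since each summand is at most one, so the constraint is vacuous on vertices of $S$. If $x_i = 0$, it becomes $\sum_{j \in N_2(i)} x_j \leq |N_2(i)| - 1$, forcing at least one $v \in N_2(i)$ with $x_v = 0$, i.e. $v \in V \setminus S$. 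Hence for every $u \in V \setminus S$ there exists $v \in V \setminus S$ with $d(u,v) = 2$, which is exactly the restrained condition.

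The main point requiring care is the interaction of the $-x_i$ term with the strict inequality in \eqref{T2SRDP3}: since the data are integer-valued, $< |N_2(i)|$ means $\leq |N_2(i)| - 1$, and one must confirm that this forces an undominated distance-$2$ neighbor outside $S$ only when $i \notin S$, while remaining vacuous when $i \in S$. Once both constraints are matched to the defining conditions, the equivalence of feasible solutions and total 2-step restrained dominating sets follows, and equality of the optimal value with the total 2-step restrained domination number is immediate from $\sum_{i=1}^n x_i = |S|$. Because this argument is entirely parallel to the proof for the 2SRDP, differing only in replacing constraint \eqref{2SRDP2} by the total-domination constraint \eqref{T2SRDP2}, I would present the domination step briefly and reuse the analysis of \eqref{T2SRDP3} verbatim from the 2SRDP case.
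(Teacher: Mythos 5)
Your proposal is correct: the bijection $S \leftrightarrow (x_i)$ together with the case analysis of constraint \eqref{T2SRDP3} (vacuous when $x_i=1$; forcing some $v\in N_2(i)\setminus S$ when $x_i=0$, using $\deg_{\text{Dist}(G;2)}(i)=|N_2(i)|$ and integrality to read the strict inequality as $\leq |N_2(i)|-1$) establishes exactly the claimed equivalence. The paper in fact states this theorem, like all of its IP-formulation theorems, without any proof, and your argument is precisely the standard verification the formulation is designed to admit, so it supplies the omitted details rather than diverging from the paper's approach.
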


\section{Probabilistic Bounds}
In this section, we present probabilistic upper bounds for $\gamma_{2\text{step}}(G)$, $\gamma_{rh}(G)$, $\gamma_{trh}(G)$, $\gamma_{2sr}(G)$, and $\gamma_{t2sr}(G)$.
\subsection{2-step domination number}

Before we present the probabilistic upper bound for $\gamma_{2\text{step}}(G)$, we shall consider first the following two known results:

\begin{theorem}\cite{Henning3}\label{total domination upper bound}
If $G$ is a graph with minimum degree $\delta$, then
$$\gamma_{t}(G)\leq\dfrac{\ln\delta+1}{\delta}n.$$
\end{theorem}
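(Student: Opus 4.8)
The plan is to prove this via the first-moment probabilistic method. First I would form a random set $A\subseteq V$ by placing each vertex into $A$ independently with probability $p\in(0,1)$, where $p$ is a parameter I will optimize at the end. The heuristic is that $A$ already totally dominates the vast majority of vertices, and the few vertices it misses can be repaired cheaply by adding one neighbor apiece.

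Next I would isolate the failures and repair them. Let $B=\{v\in V : N(v)\cap A=\emptyset\}$ be the set of vertices having no neighbor in $A$. Since $G$ admits a total dominating set we have $\delta\ge 1$, so each $v\in B$ has at least one neighbor; choosing one such neighbor for every $v\in B$ produces a set $B'$ with $|B'|\le|B|$. I would then claim that $S=A\cup B'$ is a total dominating set: every $v\notin B$ has a neighbor in $A\subseteq S$ by definition of $B$, while every $v\in B$ has a neighbor in $B'\subseteq S$ by construction, and adding vertices never destroys existing domination. The point worth emphasizing is that $B$ is defined over all of $V$, not merely $V\setminus A$, so vertices selected into $A$ must themselves acquire a neighbor in $S$ — this is exactly the extra requirement that separates total domination from ordinary domination.

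Then I would bound the expected size of $S$. By linearity, $\E[|A|]=pn$, and since $\Pr(v\in B)=(1-p)^{\deg_G(v)}\le(1-p)^{\delta}$, we obtain $\E[|B|]\le n(1-p)^{\delta}$. Applying the standard inequality $1-p\le e^{-p}$ gives
\[
\E[|S|]\le \E[|A|]+\E[|B|]\le pn+n\,e^{-p\delta}.
\]
Finally I would optimize the right-hand side over $p$: setting the derivative $n-n\delta e^{-p\delta}$ to zero yields $p=\dfrac{\ln\delta}{\delta}$, and substituting this value back produces $\E[|S|]\le\dfrac{\ln\delta+1}{\delta}\,n$. Because a random variable must attain a value at most its expectation, there exists an outcome in which $S$ is a total dominating set of cardinality at most $\frac{\ln\delta+1}{\delta}n$, which forces $\gamma_t(G)\le\frac{\ln\delta+1}{\delta}n$.

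The hard part is not the optimization, which is routine calculus, but verifying the repair step cleanly: I must confirm that $A\cup B'$ totally dominates every vertex simultaneously, handling the in-$A$ vertices correctly, and that the bound $|B'|\le|B|$ suffices so that the two expectations simply add. Once the decomposition $S=A\cup B'$ is justified, the inequality $\E[|A|+|B|]\le pn+ne^{-p\delta}$ and its minimization at $p=\ln\delta/\delta$ deliver the result directly.
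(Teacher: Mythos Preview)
Your argument is correct and is precisely the standard first-moment proof of this bound. However, there is nothing to compare against: in this paper Theorem~\ref{total domination upper bound} is quoted from \cite{Henning3} without proof, so the paper supplies no argument of its own. Your write-up matches the proof in that reference. One cosmetic point: when $\delta=1$ the optimizing value $p=\ln\delta/\delta$ degenerates to $0$, but the claimed bound then reads $\gamma_t(G)\le n$, which holds trivially; you may wish to note this edge case explicitly.
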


\begin{observation}\cite[p.915]{Henning1}\label{hop and 2-step upper bound}
If $G$ is a graph, then the following hold:
\begin{enumerate}
\item[(i)] $\gamma_h(G)=\gamma({\rm Dist}(G;2))$.
\item[(ii)] $\gamma_{2\text{step}}(G)=\gamma_t({\rm Dist}(G;2))$.
\end{enumerate}
\end{observation}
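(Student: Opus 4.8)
The plan is to show that both equalities follow immediately from unwinding the definition of $\text{Dist}(G;2)$, since adjacency in that graph is, by construction, the same relation as being at distance exactly $2$ in $G$. Concretely, for any vertex $i$ its open neighborhood in $\text{Dist}(G;2)$ is precisely $N_2(i)$, the set of vertices at distance $2$ from $i$ in $G$. With this dictionary in hand, each domination-type condition on $\text{Dist}(G;2)$ translates verbatim into the corresponding hop or 2-step condition on $G$, and it suffices to observe that the relevant families of vertex subsets coincide, forcing their minimum cardinalities to agree.

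For (i), I would argue that the family of dominating sets of $\text{Dist}(G;2)$ coincides, as a collection of subsets of $V$, with the family of hop dominating sets of $G$. Fix $S \subseteq V$. By definition $S$ dominates $\text{Dist}(G;2)$ exactly when every $i \in V \setminus S$ has a neighbor in $S$ within $\text{Dist}(G;2)$, i.e.\ there is some $j \in S$ with $d_G(i,j) = 2$; but this is verbatim the defining condition for $S$ to be a hop dominating set of $G$. Since the two families of sets are literally equal, their minimum cardinalities agree, giving $\gamma(\text{Dist}(G;2)) = \gamma_h(G)$.

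For (ii), I would run the same argument with total domination in place of domination. A total dominating set of $\text{Dist}(G;2)$ is one in which every vertex of $V$ — now including the vertices of $S$ themselves — has a neighbor in $S$, i.e.\ for every $i \in V$ there is some $j \in S$ with $d_G(i,j) = 2$. This is exactly the definition of a 2-step dominating set of $G$. Hence the two families coincide and their minima match, so $\gamma_{2\text{step}}(G) = \gamma_t(\text{Dist}(G;2))$.

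The only point requiring care — and the sole place where (i) and (ii) diverge — is keeping track of which vertices are required to have a distance-$2$ witness: for ordinary domination (hence hop domination) the requirement is imposed only on vertices outside $S$, whereas for total domination (hence 2-step domination) it is imposed on every vertex. Since this is precisely the distinction already built into the two pairs of definitions, I do not anticipate any real obstacle; the argument is a direct translation once the adjacency relation of $\text{Dist}(G;2)$ is identified with the distance-$2$ relation in $G$.
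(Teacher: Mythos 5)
Your proposal is correct, and it is exactly the intended argument: the paper states this observation without proof (citing Henning and Rad), and the standard justification is precisely your definitional unwinding, namely that adjacency in ${\rm Dist}(G;2)$ is the distance-$2$ relation in $G$, so the family of (total) dominating sets of ${\rm Dist}(G;2)$ coincides with the family of hop (respectively, $2$-step) dominating sets of $G$. Your set-family formulation even handles the degenerate case gracefully: if ${\rm Dist}(G;2)$ has an isolated vertex, then both families are simultaneously empty, so the two parameters are undefined together.
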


In view of Theorem~\ref{total domination upper bound} and Observation~\ref{hop and 2-step upper bound}, we have the following result.
 
\begin{theorem}\label{2-step bound2}
If $G$ is a graph of order $n$ with $\delta_h:=\delta_h(G)\geq 1$, then
$$\gamma_{2\text{step}}(G)\leq\dfrac{\ln\delta_h+1}{\delta_h}n.$$
\end{theorem}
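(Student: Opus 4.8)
The plan is to combine the two cited results directly. The key observation is that Theorem~\ref{total domination upper bound} gives a bound on $\gamma_t(H)$ for any graph $H$ with minimum degree at least $1$, while Observation~\ref{hop and 2-step upper bound}(ii) identifies $\gamma_{2\text{step}}(G)$ with $\gamma_t(\mathrm{Dist}(G;2))$. So the strategy is to apply the total domination bound to the auxiliary graph $H := \mathrm{Dist}(G;2)$ and then translate the hypotheses and the conclusion back to $G$.

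First I would note that by definition $\mathrm{Dist}(G;2)$ has the same vertex set $V$ as $G$, so its order is again $n$. The crucial dictionary entry is that the ordinary degree of a vertex $i$ in $\mathrm{Dist}(G;2)$ equals the number of vertices at distance exactly $2$ from $i$ in $G$, which is precisely the hop degree $\deg_h(i)$. Consequently the minimum degree of $\mathrm{Dist}(G;2)$ equals $\delta_h(G) = \delta_h$. The hypothesis $\delta_h \ge 1$ therefore guarantees that $\mathrm{Dist}(G;2)$ has minimum degree at least $1$, which is exactly the condition needed so that a total dominating set exists and Theorem~\ref{total domination upper bound} applies (no isolated vertices).

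Next I would apply Theorem~\ref{total domination upper bound} to $H = \mathrm{Dist}(G;2)$, with its minimum degree $\delta_h$ and order $n$, obtaining
\[
\gamma_t(\mathrm{Dist}(G;2)) \le \frac{\ln \delta_h + 1}{\delta_h}\, n.
\]
Finally, invoking Observation~\ref{hop and 2-step upper bound}(ii) to rewrite the left-hand side as $\gamma_{2\text{step}}(G)$ yields the desired inequality.

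Since this is essentially a substitution argument, I do not anticipate a genuine obstacle, but the one point that deserves care is verifying that the minimum-degree condition in Theorem~\ref{total domination upper bound} is met. The cited total domination bound implicitly requires a graph with no isolated vertices (otherwise no total dominating set exists and $\gamma_t$ is undefined), and it is exactly the hypothesis $\delta_h \ge 1$ that rules out isolated vertices in $\mathrm{Dist}(G;2)$. I would make this translation explicit so that the application of the borrowed theorem is clearly justified.
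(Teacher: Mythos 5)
Your proposal is correct and is essentially identical to the paper's own proof: both identify $\gamma_{2\text{step}}(G)=\gamma_t(\mathrm{Dist}(G;2))$ via Observation~\ref{hop and 2-step upper bound}(ii), observe that $\delta(\mathrm{Dist}(G;2))=\delta_h$, and apply Theorem~\ref{total domination upper bound} to $\mathrm{Dist}(G;2)$. Your added care in checking that $\delta_h\geq 1$ guarantees $\mathrm{Dist}(G;2)$ has no isolated vertices is a point the paper leaves implicit, and it is a welcome clarification.
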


\begin{proof}
Note that
\begin{eqnarray*}
\gamma_{2\text{step}}(G)&=&\gamma_t(\text{Dist}(G;2))\\
&\leq& \dfrac{\ln\delta(\text{Dist}(G;2))+1}{\delta(\text{Dist}(G;2))}n\\
&=& \dfrac{\ln\delta_h+1}{\delta_h}n.
\end{eqnarray*}
This completes the proof of the theorem.
\end{proof}

\subsection{Restrained hop and total restrained hop domination numbers}\label{restrained hop section}

Given a graph $G$, a \emph{matching} $M$ in $G$ is a set of pairwise non-adjacent edges, that is, no two edges share common vertices. A \emph{maximum matching}, also known as \emph{maximum-cardinality matching}, is a matching that contains the largest possible number of edges. There may be many maximum matchings. The \emph{matching number} of a graph $G$, denoted by $\nu(G)$ is the size of a maximum matching. This number is also called the \emph{edge independence number}.

A \emph{perfect matching}, also known as \emph{complete matching}, is a matching that matches all vertices of the graph, that is, a matching is perfect if every vertex of the graph is incident to an edge of the matching.

A \emph{near-perfect matching}, or \textit{near-complete matching}, is a matching in which exactly one vertex is unmatched. It is not difficult to see that a graph can only contain a near-perfect matching when the graph has an odd number of vertices. 

The proofs of the following three results are analogous in \cite{Zverovich}.

\begin{theorem}
If $G$ is a graph of order $n$ with $\delta_h\geq 1$ and $\nu:=\nu(G)\geq \gamma_h(G)$, then
$$\gamma_{rh}(G)\leq\dfrac{2\ln(\delta_h+1)+\delta_h+3}{\delta_h+1}n-2\nu$$ and
$$\gamma_{trh}(G)\leq\dfrac{2\ln(\delta_h)+\delta_h+2}{\delta_h}n-2\nu.$$
\end{theorem}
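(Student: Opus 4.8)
The plan is to treat both inequalities in parallel, reducing the hop-domination requirement to ordinary (resp.\ total) domination in the distance-two graph $H:=\text{Dist}(G;2)$, while handling the restrained requirement separately through the matching $M$. By Observation~\ref{hop and 2-step upper bound}, $\deg_H(i)=d_h(i)$ for every $i$, so $\delta(H)=\delta_h\ge 1$, a hop dominating set of $G$ is exactly a dominating set of $H$, and a $2$-step dominating set of $G$ is exactly a total dominating set of $H$. Fix a maximum matching $M$ with $\nu$ edges; let $W$ be its $2\nu$ saturated vertices and $U=V\setminus W$ the $n-2\nu$ unsaturated ones. Maximality of $M$ forces $U$ to be independent in $G$, so I would commit every unsaturated vertex to $S$ from the outset; this contributes the term $n-2\nu$ and confines the complement $V\setminus S$ to $W$. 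The device that disposes of the restrained condition is \emph{pair-closure}: whenever a saturated vertex is placed in $S$, its $M$-partner is placed in $S$ as well. Then $V\setminus S$ is a union of full matching edges, so every vertex outside $S$ has its partner as a $G$-neighbour outside $S$, and the restrained condition holds automatically.

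It then remains only to dominate $H$ economically. For the first inequality I would pick $A\subseteq W$ at random, each vertex independently with probability $p$, and let $B=\{v\in W: v\notin A,\ N_H(v)\cap(U\cup A)=\varnothing\}$ be the saturated vertices not yet hop-dominated by $U\cup A$; put $S=U\cup\overline{A\cup B}$, where the bar denotes pair-closure inside $W$. By construction every vertex of $V\setminus S$ retains an $H$-neighbour in $U\cup A\subseteq S$, so $S$ is a hop dominating set, and pair-closure makes it restrained. Since any $v\in B$ has all its $H$-neighbours inside $W$, independence of the coin flips gives $\Pr[v\in B]\le(1-p)^{1+d_h(v)}\le(1-p)^{1+\delta_h}$, whence $\E|B|\le 2\nu(1-p)^{1+\delta_h}$ and $\E|A|=2\nu p$. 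Pair-closure at most doubles $|A\cup B|$, so
\[
\E|S|\le (n-2\nu)+4\nu\bigl(p+(1-p)^{1+\delta_h}\bigr).
\]
Using $(1-p)^{1+\delta_h}\le e^{-p(1+\delta_h)}$, optimising at $p=\tfrac{\ln(\delta_h+1)}{\delta_h+1}$, and bounding $4\nu\le 2n$ (from $\nu\le n/2$) turns the bracket into $\tfrac{\ln(\delta_h+1)+1}{\delta_h+1}$ and reproduces $\gamma_{rh}(G)\le\tfrac{2\ln(\delta_h+1)+\delta_h+3}{\delta_h+1}n-2\nu$; a choice of $A$ attaining at most the expectation exists.

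The second inequality is handled \emph{mutatis mutandis}: I would keep the same skeleton ($U\subseteq S$, random $A\subseteq W$, pair-closure) but now demand that $S$ be a total dominating set of $H$, which is precisely the $2$-step requirement. Accordingly the fix set becomes, for each vertex of $V$ with no $H$-neighbour in $U\cup A$, one such neighbour lying in $W$ (which exists since $\delta_h\ge 1$ and otherwise the vertex is already dominated by $U$); the relevant failure probability is $(1-p)^{\delta_h}$ rather than $(1-p)^{1+\delta_h}$, so the optimisation runs through the total-domination exponent with $p=\tfrac{\ln\delta_h}{\delta_h}$, producing the constant $\tfrac{\ln\delta_h+1}{\delta_h}$ of Theorem~\ref{total domination upper bound} applied to $H$ and hence the stated bound. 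The main obstacle, and the whole point of the matching, is the interference between the two requirements: naively repairing a restrained violation (an isolated vertex in $V\setminus S$) by enlarging $S$ can create fresh violations and cascade, while repairing domination can destroy the restrained structure; pair-closure breaks this coupling cleanly, at the cost of the factor $2$ and the $-2\nu$ correction. The hypothesis $\nu\ge\gamma_h(G)$ is what makes that correction substantial enough for the bound to improve on the plain estimate of Theorem~\ref{2-step bound2}, and I would double-check whether it is truly needed for validity or, as in the construction above, only for nontriviality, since here the argument leans on $\nu\le n/2$ rather than on a lower bound for $\nu$.
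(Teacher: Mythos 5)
Your proposal is correct, but it takes a genuinely different route from the paper's. The paper argues deterministically on top of already-known probabilistic bounds: it takes a minimum hop dominating set $D$ (resp.\ a minimum 2-step dominating set $D^{(t)}$) and a maximum matching $M$, makes the complement of $S$ exactly the endpoints of the $\nu-k$ matching edges having both ends outside $D$, gets $\gamma_{rh}(G)\le n-2(\nu-k)\le n+2\gamma_h-2\nu$ from $k\le|D|$ (and analogously $\gamma_{trh}(G)\le n+2\gamma_{2\text{step}}-2\nu$), and then substitutes the Henning--Rad bound $\gamma_h\le\frac{\ln(\delta_h+1)+1}{\delta_h+1}n$ and Theorem~\ref{2-step bound2}. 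You instead rebuild the randomized argument from scratch inside the saturated set $W$: random $A$, failure sets $B$ (resp.\ the fix set for total domination in ${\rm Dist}(G;2)$), and pair-closure along $M$ to enforce the restrained condition; I checked your expectations ($\E|A|=2\nu p$, $\E|B|\le 2\nu(1-p)^{1+\delta_h}$ since a vertex of $B$ has all its $N_2$-neighbours in $W$, fix set at most $n(1-p)^{\delta_h}$ in the total case, doubling by closure, and $4\nu\le 2n$), and the optimizations at $p=\frac{\ln(\delta_h+1)}{\delta_h+1}$ and $p=\frac{\ln\delta_h}{\delta_h}$ reproduce exactly the stated constants. The paper's route buys brevity, since all randomness is outsourced to cited results; yours buys sharper information. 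In particular, your closing suspicion is right: your construction never uses $\nu\ge\gamma_h(G)$, so that hypothesis is needed only for the bound to beat the plain estimates, not for validity --- and in fact the paper's own chain also only uses $k\le\gamma_h$, so the same remark applies there (your route also sidesteps a wrinkle in the paper's total part, where the analogous meaningfulness condition would really be $\nu\ge\gamma_{2\text{step}}$, which does not follow from $\nu\ge\gamma_h$). Moreover, before relaxing $4\nu\le 2n$ your intermediate bound $(n-2\nu)+4\nu\bigl(p+(1-p)^{1+\delta_h}\bigr)$ is strictly better whenever $\nu<n/2$. Two details to make explicit in a write-up: the fix vertex in the total case must be chosen in $W\setminus A$ (your $\delta_h\ge 1$ argument does give this) so that pair-closure applies to it, and at $\delta_h=1$ the optimizer $p=\frac{\ln\delta_h}{\delta_h}=0$ is degenerate, though the resulting bound $3n-2\nu$ then holds trivially.
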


\begin{proof} Let $D$ be a minimum hop dominating set of $G$. Then $|D|=\gamma_h(G)=:\gamma_h$. 
Let $M$ be a maximum matching of $G$. Then $\nu:=\nu(G)=|M|$. Let $M=\{e_1,e_2,\ldots,e_\nu\}$. If all of the end vertices of the edges in $M$ are not in $D$, we may replace an edge of $M$ with an edge such that one of the end vertex is in $D$. Hence, we may assume that the first $k$ edges of $M$ have at least one end vertex contained in $D$. It follows that $\nu-k$ edges in $M$ have both end vertices contained in $V(G)\setminus D$. Since $k\leq |D|=\gamma_h(G)$, $\nu-\gamma_h\leq \nu-k$. Since $\nu\geq \gamma_h$, at least $\nu-\gamma_h$ edges in $M$ have both end vertices contained in $V(G)\setminus D$. Let $D'=\{u\in V(G)\setminus D: \mbox{$u$ is not an end vertex of the $\nu-k$ edges in $M$}\}$. Let $S:=D\cup D'$. Then $S$ is a restrained hop dominating set of $G$. Now,
\begin{eqnarray*}
\gamma_{rh}(G)&\leq&|S|\\
&=&|D\cup D'|\\
&=&n-2\lp \nu-k\rp\\
&\leq&n-2\lp\nu-\gamma_h\rp\\
&=&n+2\gamma_h-2\nu.
\end{eqnarray*}
In view of \cite[Theorem~12, p.926]{Henning1}, $\gamma_h\leq\dfrac{\ln(\delta_h+1)+1}{\delta_h+1} n$. Hence, 
$$\gamma_{rh}(G)\leq\dfrac{2\ln(\delta_h+1)+\delta_h+3}{\delta_h+1}n-2\nu.$$
This proves the first part of the theorem.

Next, let $D^{(t)}$ be a minimum 2-step dominating set of $G$. Then $|D^{(t)}|=\gamma_{2\text{step}}(G):=\gamma_{2\text{step}}$. Using the same technique employed above, we can construct a total restrained hop dominating set $S_t:=D^{(t)}\cup D'$ of $G$ so that
\begin{eqnarray*}
\gamma_{trh}(G)&\leq&n+2\gamma_{2\text{step}}-2\nu.
\end{eqnarray*}
By Theorem~\ref{2-step bound2}, $\gamma_{2\text{step}}\leq\dfrac{\ln \delta_h+1}{\delta_h}n$. Hence, 
$$\gamma_{trh}(G)\leq\dfrac{2\ln \delta_h+\delta_h+2}{\delta_h}n-2\nu.$$
This completes the proof of the theorem.
\end{proof}

\begin{corollary}\label{matching restrained hop}
If a graph $G$ of order $n$ has a perfect matching with $\nu(G)\geq \gamma_h(G)$, then
$$\gamma_{rh}(G)\leq\dfrac{\ln(\delta_h+1)+1}{\delta_h+1}2n$$ and
$$\gamma_{trh}(G)\leq\dfrac{\ln\delta_h+1}{\delta_h}2n.$$
\end{corollary}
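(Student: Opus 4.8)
The plan is to obtain this as an immediate specialization of the preceding theorem. The key observation is that a perfect matching saturates every vertex of $G$, so it consists of exactly $n/2$ edges; consequently $\nu(G)=n/2$ and $2\nu=n$ (in particular $n$ must be even). Since the hypothesis $\nu(G)\geq\gamma_h(G)$ is assumed, both inequalities of the preceding theorem are available, and it remains only to substitute $2\nu=n$ and simplify.

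For the first bound, substituting $2\nu=n$ into $\gamma_{rh}(G)\leq\frac{2\ln(\delta_h+1)+\delta_h+3}{\delta_h+1}\,n-2\nu$ produces a term $-n=-\frac{\delta_h+1}{\delta_h+1}\,n$, which I would combine with the leading fraction by subtracting $\delta_h+1$ from the numerator. This reduces $2\ln(\delta_h+1)+\delta_h+3$ to $2\ln(\delta_h+1)+2=2\bigl(\ln(\delta_h+1)+1\bigr)$, yielding $\gamma_{rh}(G)\leq\frac{\ln(\delta_h+1)+1}{\delta_h+1}\,2n$. The second bound follows by the identical maneuver: substituting $2\nu=n$ into $\gamma_{trh}(G)\leq\frac{2\ln\delta_h+\delta_h+2}{\delta_h}\,n-2\nu$ and writing $-n=-\frac{\delta_h}{\delta_h}\,n$ cancels the $\delta_h$ in the numerator, leaving $2\ln\delta_h+2=2(\ln\delta_h+1)$ and hence $\gamma_{trh}(G)\leq\frac{\ln\delta_h+1}{\delta_h}\,2n$.

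I do not anticipate any genuine obstacle here: the statement is a direct corollary, and the entire content lies in recognizing that $2\nu=n$ for a perfect matching, together with a one-line algebraic simplification in each case. The only point requiring any care is confirming that the constant absorbed from the $-2\nu$ term matches the denominator exactly—namely $\delta_h+1$ in the restrained hop case and $\delta_h$ in the total restrained hop case—which it does in both instances.
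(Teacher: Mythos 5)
Your proposal is correct and is exactly the intended argument: the paper states this as an immediate corollary of the preceding theorem, with the perfect matching giving $2\nu(G)=n$, and your algebraic simplification (absorbing $-n$ into the numerator, reducing $2\ln(\delta_h+1)+\delta_h+3$ to $2(\ln(\delta_h+1)+1)$ and $2\ln\delta_h+\delta_h+2$ to $2(\ln\delta_h+1)$) checks out in both cases. Nothing is missing beyond the implicit standing assumption $\delta_h\geq 1$ needed for the bounds to be defined, which the paper likewise leaves tacit.
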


\begin{corollary}
If a graph $G$ of order $n$ has a near-perfect matching with $\nu(G)\geq \gamma_h(G)$, then
$$\gamma_{rh}(G)\leq\dfrac{\ln(\delta_h+1)+1}{\delta_h+1}2n+1$$ and
$$\gamma_{trh}(G)\leq\dfrac{\ln\delta_h+1}{\delta_h}2n+1$$
\end{corollary}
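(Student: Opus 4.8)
The plan is to specialize the two inequalities established in the preceding theorem to the case of a near-perfect matching. Recall that the theorem gives, for any graph $G$ with $\delta_h \geq 1$ and $\nu \geq \gamma_h(G)$, the intermediate bounds $\gamma_{rh}(G) \leq n + 2\gamma_h - 2\nu$ and $\gamma_{trh}(G) \leq n + 2\gamma_{2\text{step}} - 2\nu$, before the hop- and 2-step-domination estimates are substituted. The whole corollary reduces to plugging in the correct value of $\nu$ for a near-perfect matching and then invoking those same domination bounds.

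First I would compute the matching number. By definition a near-perfect matching leaves exactly one vertex unmatched, so it covers $n-1$ vertices; since each edge of a matching covers two distinct vertices, such a matching consists of $(n-1)/2$ edges (in particular $n$ must be odd). Hence $\nu = \nu(G) = (n-1)/2$, so that $2\nu = n-1$.

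Next I would substitute $2\nu = n-1$ into the intermediate bound for $\gamma_{rh}(G)$, obtaining
$$\gamma_{rh}(G) \leq n + 2\gamma_h - (n-1) = 2\gamma_h + 1.$$
Applying the hop domination bound $\gamma_h \leq \dfrac{\ln(\delta_h+1)+1}{\delta_h+1}n$ (cited as \cite[Theorem~12, p.926]{Henning1} in the previous proof) then yields the first claimed inequality. The argument for the total restrained case is identical: substituting $2\nu = n-1$ into $\gamma_{trh}(G) \leq n + 2\gamma_{2\text{step}} - 2\nu$ gives $\gamma_{trh}(G) \leq 2\gamma_{2\text{step}} + 1$, and Theorem~\ref{2-step bound2}, which furnishes $\gamma_{2\text{step}} \leq \dfrac{\ln\delta_h+1}{\delta_h}n$, produces the second inequality.

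There is essentially no hard step here; this is a direct corollary of the preceding theorem, mirroring the perfect-matching Corollary~\ref{matching restrained hop} with the single modification that $2\nu = n-1$ rather than $2\nu = n$, which accounts precisely for the additive $+1$ in both bounds. The only point requiring care is the correct bookkeeping of $\nu = (n-1)/2$ for a near-perfect matching and the implicit hypothesis that $G$ has odd order, so that the hypothesis $\nu \geq \gamma_h(G)$ of the underlying theorem is available.
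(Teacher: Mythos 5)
Your proposal is correct and matches the paper's (implicit) derivation: the paper states this corollary without proof as an immediate consequence of the preceding theorem, and substituting $2\nu = n-1$ for a near-perfect matching into the bound $\gamma_{rh}(G)\leq n+2\gamma_h-2\nu$ (resp.\ $\gamma_{trh}(G)\leq n+2\gamma_{2\text{step}}-2\nu$) and then applying $\gamma_h\leq\frac{\ln(\delta_h+1)+1}{\delta_h+1}n$ and $\gamma_{2\text{step}}\leq\frac{\ln\delta_h+1}{\delta_h}n$ is exactly the intended argument. Your bookkeeping ($\nu=(n-1)/2$, odd order, retention of the hypothesis $\nu(G)\geq\gamma_h(G)$) is accurate.
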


\bigskip
Next, we improve the assumption in Corollary~\ref{matching restrained hop}.  From now onwards, let $\R$ be the set of real numbers and let $C^n = \left\{\textbf{\textit{p}}=(p_1,\ldots,p_n):p_i\in\R, 0\leq p_i\leq 1\right\}$. 

Let $f_{rh}:C^n\to\R$ be the function defined by
\begin{eqnarray*}
f_{rh}(\textbf{\textit{p}})&=&\sum_{i=1}^n p_i +\sum_{i=1}^n (1-p_i)\ds\prod_{j\in N_2(i)}(1-p_j)\\
&&\quad+\sum_{i=1}^n (1-p_i)\lp 1-(1-p_i)\ds\prod_{j\in N_2(i)}(1-p_j)\rp\\
&&\quad\quad\times\ds\prod_{j\in N(i)}\lb p_j+(1-p_j)\ds\prod_{k\in N_2(j)}(1-p_k)\rb.
\end{eqnarray*}

\begin{theorem}\label{restrained hop bound}
If $G$ is a graph of order $n$, then 
$$\gamma_{rh}(G)=\ds\min_{\textbf{\textit{p}}\in C^n} f_{rh}(\textbf{\textit{p}})$$
\end{theorem}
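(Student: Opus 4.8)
The plan is to prove the two inequalities $\min_{p\in C^n} f_{rh}(p)\le \gamma_{rh}(G)$ and $\gamma_{rh}(G)\le f_{rh}(p)$ for every $p\in C^n$; together these give the claimed equality. The first inequality I would obtain by evaluating $f_{rh}$ at a single $0/1$ point, and the second by a first-moment (probabilistic) argument that exhibits, for each $p$, a restrained hop dominating set of expected size at most $f_{rh}(p)$.

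For the first inequality, let $S^{*}$ be a minimum restrained hop dominating set and set $p=\mathbf 1_{S^{*}}$ (so $p_i=1$ iff $i\in S^{*}$). First I would check that the first sum contributes exactly $|S^{*}|=\gamma_{rh}(G)$. For the second sum, every term with $i\in S^{*}$ vanishes through the factor $(1-p_i)$, while for $i\notin S^{*}$ the hop domination property supplies some $j\in N_2(i)\cap S^{*}$, so $\prod_{j\in N_2(i)}(1-p_j)=0$; hence the second sum is $0$. For the third sum, the terms with $i\in S^{*}$ again vanish, and for $i\notin S^{*}$ the restrained condition supplies a neighbor $j\in N(i)$ with $j\notin S^{*}$, for which the bracket $p_j+(1-p_j)\prod_{k\in N_2(j)}(1-p_k)$ equals $0$ (using $p_j=0$ together with the fact that $j$, being outside $S^{*}$, is hop dominated); so the neighbor-product vanishes and the third sum is $0$. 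Thus $f_{rh}(\mathbf 1_{S^{*}})=\gamma_{rh}(G)$, giving $\min_p f_{rh}(p)\le\gamma_{rh}(G)$.

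For the second inequality, fix $p\in C^n$ and form a random set $A$ by placing each vertex $i$ into $A$ independently with probability $p_i$. Let $B_1=\{i\notin A:\ N_2(i)\cap A=\varnothing\}$ be the vertices not yet hop dominated, and let $B_2=\{i\notin A\cup B_1:\ N(i)\subseteq A\cup B_1\}$ be the vertices violating the restrained condition with respect to $A\cup B_1$; put $S=A\cup B_1\cup B_2$. I would first verify that $S$ is a restrained hop dominating set: any $u\notin S$ lies outside $A\cup B_1$, hence (not being in $B_1$) has an $N_2$-neighbor in $A\subseteq S$, so $S$ hop dominates $u$; and since $u\notin B_2$ it has a neighbor $w\notin A\cup B_1$, and this $w$ in turn has the neighbor $u\notin A\cup B_1$, so $w$ fails the defining condition of $B_2$ and therefore $w\notin S$, giving $u$ a neighbor outside $S$. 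Because $A,B_1,B_2$ are pairwise disjoint, $\E|S|=\sum_i(\Pr[i\in A]+\Pr[i\in B_1]+\Pr[i\in B_2])$, and independence of the coordinates gives $\Pr[i\in A]=p_i$ and $\Pr[i\in B_1]=(1-p_i)\prod_{j\in N_2(i)}(1-p_j)$, matching the first two sums of $f_{rh}$ exactly.

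The main obstacle is the third sum: I must show $\Pr[i\in B_2]$ is at most the $i$-th summand $(1-p_i)\bigl(1-(1-p_i)\prod_{j\in N_2(i)}(1-p_j)\bigr)\prod_{j\in N(i)}\bigl[p_j+(1-p_j)\prod_{k\in N_2(j)}(1-p_k)\bigr]$. Using the inclusion $\{i\in B_2\}\subseteq\{i\notin A\}\cap\{i\notin B_1\}\cap\bigcap_{j\in N(i)}\{j\in A\cup B_1\}$ together with the elementary estimate $\Pr[i\notin A\cup B_1]\le(1-p_i)(1-\Pr[i\in B_1])$, this reduces to factoring the probability of the neighbor events $\{j\in A\cup B_1\}$ into the stated product. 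The delicate point is precisely this factorization: each event $\{j\in A\cup B_1\}$ is neither increasing nor decreasing in the underlying Bernoulli coordinates, and these events share randomness through overlapping second neighborhoods, so a plain independence appeal is unavailable; I would handle it by conditioning on the coordinates in $\{i\}\cup N_2(i)$ and applying a suitable correlation (FKG-type) inequality to the remaining product measure, over-estimating where necessary to reach the product form. Granting this bound, $\E|S|\le f_{rh}(p)$, so the first-moment method yields an outcome with $|S|\le f_{rh}(p)$; as $S$ is always a restrained hop dominating set, $\gamma_{rh}(G)\le f_{rh}(p)$ for every $p$, and hence $\gamma_{rh}(G)\le\min_p f_{rh}(p)$. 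Combined with the first inequality, this proves $\gamma_{rh}(G)=\min_{p\in C^n}f_{rh}(p)$.
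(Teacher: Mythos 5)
Your plan is structurally identical to the paper's own proof. The paper forms exactly the same random sets (its $X$, $Z$, $Y$ are your $A$, $B_1$, $B_2$), verifies just as you do that $D=X\cup Z\cup Y$ is a restrained hop dominating set, computes $\Pr[i\in X]=p_i$ and $\Pr[i\in Z]=(1-p_i)\prod_{j\in N_2(i)}(1-p_j)$ as you do, and for the other direction evaluates $f_{rh}$ at the indicator vector of a minimum restrained hop dominating set; your verification that the second and third sums vanish at that $0/1$ point is correct and is in fact more detailed than the paper's one-line assertion. So everything except the third summand is both correct and the same as the paper.

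The gap is the step you yourself flagged, and it cannot be repaired along the lines you propose, because the inequality you need, $\Pr[i\in B_2]\le (1-p_i)\bigl(1-(1-p_i)\prod_{j\in N_2(i)}(1-p_j)\bigr)\prod_{j\in N(i)}\bigl[p_j+(1-p_j)\prod_{k\in N_2(j)}(1-p_k)\bigr]$, is false in general. Take $V=\{u,v,w,a,b\}$ with edges $uv,uw,vw,va,wa,ab$, so that $N(u)=\{v,w\}$, $N_2(u)=\{a\}$, $N_2(v)=N_2(w)=\{b\}$, $N_2(a)=\{u\}$, $N_2(b)=\{v,w\}$, and set $p_u=p_v=p_w=0$, $p_a=1$, $p_b=1/2$. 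Then $u\notin A\cup B_1$ always (as $a\in N_2(u)\cap A$), and the two neighbor events coincide: $\{v\in A\cup B_1\}=\{w\in A\cup B_1\}=\{b\notin A\}$, so $\Pr[u\in B_2]=1/2$, whereas the $u$-th summand equals $1\cdot 1\cdot(1/2)(1/2)=1/4$. The culprit is the shared second neighborhood $N_2(v)=N_2(w)=\{b\}$, which makes the neighbor events perfectly positively correlated; conditioning on the coordinates of $\{u\}\cup N_2(u)=\{u,a\}$ does nothing to decouple them, and correlation inequalities of FKG type point the wrong way (for positively correlated events they \emph{lower}-bound the intersection probability by the product; an upper bound of the kind you want would require negative association, which fails here). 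Note that the paper's own proof founders at precisely this spot: it asserts $\E[Y_i]$ \emph{equals} the third summand, implicitly treating these dependent events as independent, and in the example above the true values are $\Pr[u\in Y]=1/2$ and $\Pr[b\in Y]=0$ against claimed summands $1/4$ and $1/4$ (the totals happen to cancel in this instance, but the termwise identity is wrong). So you correctly located a real hole, but closing it requires a genuinely different idea — for instance a different correction set $Y$, or a direct analytic proof that $f_{rh}(\textbf{\textit{p}})\ge\gamma_{rh}(G)$ on all of $C^n$ — rather than a correlation inequality; as it stands, neither your argument nor the paper's establishes the inequality $\gamma_{rh}(G)\le\min_{\textbf{\textit{p}}\in C^n}f_{rh}(\textbf{\textit{p}})$.
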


\begin{proof}
Let $G$ be a graph with vertex set $V = \{1,2,\ldots,n\}$. We pick randomly and independently each vertex $i\in V$ with probability $p_i$, where $0\leq p_i\leq1$, to form a set $X\subseteq V$, that is, ${\mathbb P}(i\in X):={\mathbb P}(\{i\in V: i\in X\})=p_i$.
Let $Z=\{i\notin X: N_2(i)\cap X=\varnothing\}$ and $Y=\{i\notin X\cup Z: N(i)\subseteq X\cup Z\}$. Consider $D=X\cup Z\cup  Y$. 

First, we show that $D$ is a restrained hop dominating set of $G$.
Let $u\in V\setminus D$. Then $u\notin X\cup Z\cup Y$ so that $N_2(u)\cap X\neq\varnothing$. This means that there exists $v\in X\subset D$ such that $d_G(u,v)=2$. Also, $N(u)\nsubseteq X\cup Z$. It follows that there exists $s\in N(u)$ such that $s\notin X\cup Z$. If $s\in Y$, then $N(s)\subseteq X\cup Z$, which implies that $u\in N(s)\subseteq X\cup Z$, a contradiction. Hence, $s\notin Y$. Thus, $s\in V\setminus D$. Accordingly, $D$ is a restrained hop dominating set of $G$.

Define $X_i:=X(i)$ by
\[ X_i= \left\{ \begin{array}{ll}
         1 & \mbox{if $i\in X$}\\
        0 & \mbox{otherwise}.
        \end{array} \right. 
\] 
Similarly, define $Z_i:=Z(i)$ (resp., $Y_i:=Y(i)$) by
\[ \mbox{$Z_i$ (resp., $Y_i$)}= \left\{ \begin{array}{ll}
         1 & \mbox{if $i\in Z$ (resp., $i\in Y$)}\\
        0 & \mbox{otherwise}.
        \end{array} \right. 
\] 
Then $|X|=\ds\sum_{i=1}^n X_i$, $|Z|=\ds\sum_{i=1}^n Z_i$, and $|Y|=\ds\sum_{i=1}^n Y_i$. Note that
$$\E[X_i]={\mathbb P}(i\in X)=p_i,$$
$$\E[Z_i]={\mathbb P}(i\in Z)=(1-p_i)\ds\prod_{j\in N_2(i)}(1-p_j),$$ and
\begin{eqnarray*}
\E[Y_i]&=&{\mathbb P}(i\in Y)\\
&=&(1-p_i)\lp 1-(1-p_i)\ds\prod_{j\in N_2(i)}(1-p_j)\rp\ds\prod_{j\in N(i)}\lb p_j+(1-p_j)\ds\prod_{k\in N_2(j)}(1-p_k)\rb.
\end{eqnarray*}
Hence,
\begin{eqnarray*}
\E\lb |D|\rb&=& \E\lb |X|\rb+\E\lb |Z|\rb++\E\lb |Y|\rb\\
&=&\sum_{i=1}^n p_i +\sum_{i=1}^n (1-p_i)\ds\prod_{j\in N_2(i)}(1-p_j)\\
&&\quad+\sum_{i=1}^n (1-p_i)\lp 1-(1-p_i)\ds\prod_{j\in N_2(i)}(1-p_j)\rp\\
&&\quad\quad\times\ds\prod_{j\in N(i)}\lb p_j+(1-p_j)\ds\prod_{k\in N_2(j)}(1-p_k)\rb\\
&=&f_{rh}(p_1,p_2,\ldots,p_n).
\end{eqnarray*}
This means that there exists a restrained hop dominating set of $G$ of cardinality at most $\E[|D|]$. Thus, $\gamma_{rh}(G)\leq\ds\min_{\textbf{\textit{p}}\in C^n} f_{rh}(\textbf{\textit{p}})$.

Next, let $D'$ be a minimum restrained hop dominating set of $G$. Then $|D'|=\gamma_{rh}(G)$. Let $\textbf{\textit{p}}'=\lp p_1',p_2',\ldots,p_n'\rp$, where
\[ p_i'= \left\{ \begin{array}{ll}
         1 & \mbox{if $i\in D'$}\\
        0 & \mbox{otherwise}.
        \end{array} \right. 
\]  
Then $$f_{rh}(\textbf{\textit{p}}')=\sum_{i=1}^n p_i'=|D'|=\gamma_{rh}(G).$$
Accordingly, $\gamma_{rh}(G)=\ds\min_{\textbf{\textit{p}}\in C^n} f_{rh}(\textbf{\textit{p}})$.
\end{proof}

\begin{theorem}\label{restrained hop bound2}
If $G$ is a graph of order $n$ with $\delta_h\geq 1$, then
$$\gamma_{rh}(G)\leq\dfrac{\ln(\delta_h+1)+1}{\delta_h+1}2n.$$
\end{theorem}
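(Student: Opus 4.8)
The plan is to invoke Theorem~\ref{restrained hop bound}, which gives $\gamma_{rh}(G)=\min_{\textbf{\textit{p}}\in C^n} f_{rh}(\textbf{\textit{p}})$, and then to estimate $f_{rh}$ at the \emph{constant} vector $\textbf{\textit{p}}=(p,\ldots,p)$ for a single well-chosen $p\in[0,1]$; any upper bound on $f_{rh}(p,\ldots,p)$ is automatically an upper bound on $\gamma_{rh}(G)$. Guided by the hop-domination estimate $\gamma_h\leq\frac{\ln(\delta_h+1)+1}{\delta_h+1}n$ from \cite[Theorem~12]{Henning1}, I would take $p=\frac{\ln(\delta_h+1)}{\delta_h+1}$, which lies in $[0,1]$ because $0<\ln(\delta_h+1)\leq\delta_h+1$. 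The one estimate that drives everything is obtained from $1-p\leq e^{-p}$: since $d_h(i)=|N_2(i)|\geq\delta_h\geq1$ for every vertex $i$,
$$(1-p)^{d_h(i)+1}\leq(1-p)^{\delta_h+1}\leq e^{-p(\delta_h+1)}=\frac{1}{\delta_h+1}.$$

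First I would dispose of the first two sums of $f_{rh}$. With $p_i\equiv p$ the first sum is exactly $np=n\frac{\ln(\delta_h+1)}{\delta_h+1}$, and each summand of the second sum is $(1-p)\prod_{j\in N_2(i)}(1-p)=(1-p)^{d_h(i)+1}\leq\frac{1}{\delta_h+1}$, so the second sum is at most $\frac{n}{\delta_h+1}$. Hence the first two sums contribute at most $n\frac{\ln(\delta_h+1)+1}{\delta_h+1}$, which is exactly half the target. (This is unsurprising: $X\cup Z$ is already a hop dominating set, so this step just recovers the hop-domination bound.)

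The substantive step is the third sum, and the key is that every factor inside its product is strictly less than $1$. For each $j$ we have $p+(1-p)\prod_{k\in N_2(j)}(1-p)=p+(1-p)^{d_h(j)+1}\leq p+\frac{1}{\delta_h+1}=\frac{\ln(\delta_h+1)+1}{\delta_h+1}=:q$, and $q<1$ for every $\delta_h\geq1$ (its maximum over $\delta_h\geq1$ is $\frac{\ln2+1}{2}<1$ at $\delta_h=1$, and $q$ decreases afterwards). Crucially, $\delta_h\geq1$ forces $\deg_G(i)\geq1$ for every $i$, since a vertex at distance $2$ from $i$ exhibits a neighbour of $i$; therefore the product loses at least one factor of $q$, giving $\prod_{j\in N(i)}\left[p+(1-p)^{d_h(j)+1}\right]\leq q^{\deg_G(i)}\leq q$. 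Bounding the remaining factors crudely by $(1-p)\leq1$ and $\left(1-(1-p)^{d_h(i)+1}\right)\leq1$, each summand of the third sum is at most $q$, so the third sum is at most $nq=n\frac{\ln(\delta_h+1)+1}{\delta_h+1}$.

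Adding the two halves yields $f_{rh}(p,\ldots,p)\leq2n\frac{\ln(\delta_h+1)+1}{\delta_h+1}$, and the theorem follows from Theorem~\ref{restrained hop bound}. I expect the third sum to be the only real obstacle: the naive bounds (replacing the entire product and the bracket by $1$) give only $(1-p)n$, which is hopeless for large $\delta_h$, so the whole saving must come from the single guaranteed factor of $q<1$ supplied by $\deg_G(i)\geq1$. The delicate point to verify is that the \emph{one} choice $p=\frac{\ln(\delta_h+1)}{\delta_h+1}$ simultaneously makes the first-two-sums estimate tight and keeps $q<1$; both hold for all $\delta_h\geq1$.
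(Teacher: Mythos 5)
Your proposal is correct and takes essentially the same route as the paper: both proofs feed the constant vector $(p,\ldots,p)$ into Theorem~\ref{restrained hop bound}, use $d_h(i)\geq\delta_h$ together with $1-x\leq e^{-x}$, and exploit the single guaranteed factor from $\deg_G(i)\geq 1$ to collapse the product in the third sum, arriving at the bound $2np+2ne^{-p(\delta_h+1)}$ with $p=\frac{\ln(\delta_h+1)}{\delta_h+1}$. The only cosmetic difference is that you substitute this optimal $p$ at the outset and verify $q<1$ directly, whereas the paper keeps $p$ symbolic and minimizes $g(p)=2np+2ne^{-p(\delta_h+1)}$ at the end (and uses $\delta\geq 1$ only implicitly, a point you justify explicitly).
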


\begin{proof}
Let $\textbf{\textit{p}}=(p,p,\ldots,p)\in C^n$. For each $i\in\{1,2,\ldots,n\}$, $\delta_h\leq d_h(i)$ and $1-x\leq e^{-x}$ for $x\in\R$, and we have
\begin{eqnarray*}
f_{rh}(\textbf{\textit{p}})&=&\sum_{i=1}^n p+\sum_{i=1}^n(1-p)^{d_h(i)+1}\\
&&\qquad+\sum_{i=1}^n (1-p)\lp 1-(1-p)^{d_h(i)+1}\rp\lp p+(1-p)^{d_h(i)+1}\rp^{\deg(i)}\\
&\leq& np+n(1-p)^{\delta_h+1}+n(1-p)\lp p+(1-p)^{\delta_h+1}\rp^{\delta}\\
&\leq& np+n(1-p)^{\delta_h+1}+n(1-p)\lp p+(1-p)^{\delta_h+1}\rp\\
&\leq& np+n(1-p)^{\delta_h+1}+np(1-p)+n(1-p)^{\delta_h+2}\\
&\leq& 2np + 2n(1-p)^{\delta_h+1}\\
&\leq& 2np+2ne^{-p(\delta_h+1)}.
\end{eqnarray*}
Note that the function $g(p)=2np+2ne^{-p(\delta_h+1)}$, $p\in[0,1]$, has an absolute minimum value at $p=\dfrac{\ln (\delta_h+1)}{\delta_h+1}$. Observe that $0<\dfrac{\ln(\delta_h+1)}{\delta_h+1}<1$. Let $\textbf{\textit{p}}'=(p',p',\ldots,p')\in C^n$, where $p'=\dfrac{\ln(\delta_h+1)}{\delta_h+1}$. Then by Theorem~\ref{restrained hop bound},
$$\gamma_{rh}(G)\leq f(\textbf{\textit{p}}')\leq 2np'+2ne^{-p'(\delta_h+1)}=\dfrac{\ln(\delta_h+1)+1}{\delta_h+1}2n.$$
This completes the proof of the theorem.
\end{proof}

Next, we improve the upper bound in Theorem~\ref{restrained hop bound2} but we need to put additional assumption. This technique is also used in \cite{Zverovich}.

\begin{theorem}
If $G$ be a graph of order $n$ with $\delta,\delta_h\geq 1$ and $n<\ds\frac{\delta\delta_h}{\ln\delta_h+1}$, then
$$\gamma_{rh}(G)\leq\dfrac{\ln(\delta_h+1)+1}{\delta_h+1}n$$ and
$$\gamma_{trh}(G)\leq\dfrac{\ln\delta_h+1}{\delta_h}n$$
\end{theorem}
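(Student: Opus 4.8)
The plan is to sidestep the probabilistic function $f_{rh}$ entirely and instead extract everything from the hypothesis $n<\frac{\delta\delta_h}{\ln\delta_h+1}$ by a short counting argument. First I would rewrite the hypothesis in the equivalent form $\frac{\ln\delta_h+1}{\delta_h}n<\delta$, which is legitimate since $\delta_h\ge 1$ forces $\ln\delta_h+1>0$. Combining this with Theorem~\ref{2-step bound2} gives $\gamma_{2\text{step}}(G)\le\frac{\ln\delta_h+1}{\delta_h}n<\delta$, and since every $2$-step dominating set is in particular a hop dominating set, we also obtain $\gamma_h(G)\le\gamma_{2\text{step}}(G)<\delta$. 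The entire purpose of the extra hypothesis is thus to guarantee that the relevant minimum dominating sets contain fewer than $\delta$ vertices.

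For the first bound I would take $D$ to be a minimum hop dominating set, so that $|D|=\gamma_h(G)<\delta$, and claim that $D$ is already restrained. Indeed, for any $u\in V\setminus D$ we have $\deg_G(u)\ge\delta>|D|$, so $N(u)$ cannot be contained in $D$; hence $u$ has a neighbour lying in $V\setminus D$, which is exactly the restrained condition. Therefore $\gamma_{rh}(G)\le|D|=\gamma_h(G)$, and invoking the hop-domination bound \cite[Theorem~12, p.926]{Henning1}, namely $\gamma_h(G)\le\frac{\ln(\delta_h+1)+1}{\delta_h+1}n$, finishes the first inequality.

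The second bound is identical in structure. I would take $D^{(t)}$ to be a minimum $2$-step dominating set, so $|D^{(t)}|=\gamma_{2\text{step}}(G)<\delta$. The same pigeonhole step — any $u\in V\setminus D^{(t)}$ has $\deg_G(u)\ge\delta>|D^{(t)}|$, so some neighbour of $u$ lies outside $D^{(t)}$ — shows that $D^{(t)}$ is a total restrained hop dominating set. Hence $\gamma_{trh}(G)\le\gamma_{2\text{step}}(G)\le\frac{\ln\delta_h+1}{\delta_h}n$ by Theorem~\ref{2-step bound2}.

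The substantive step is really just the reinterpretation of the hypothesis: once one sees that $n<\frac{\delta\delta_h}{\ln\delta_h+1}$ is precisely what forces $\gamma_h(G),\gamma_{2\text{step}}(G)<\delta$, the restrained condition comes for free from a one-line degree count, and no optimization of $f_{rh}$ is required. The only point demanding care is the chain $\gamma_h(G)\le\gamma_{2\text{step}}(G)<\delta$ together with the \emph{strictness} of the inequality, since it is strictness that turns "$N(u)\subseteq D$ would force $|D|\ge\delta$" into a genuine contradiction.
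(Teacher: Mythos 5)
Your proposal is correct and takes essentially the same route as the paper: both recast the hypothesis as $\delta>\frac{\ln\delta_h+1}{\delta_h}n$, observe that $\delta$ then strictly exceeds the probabilistic upper bounds on the hop and $2$-step domination numbers, and use the degree pigeonhole ($\deg_G(u)\geq\delta>|D|$ forces a neighbour of $u$ outside $D$) to conclude that the hop/2-step dominating sets are automatically restrained. The only cosmetic difference is how you certify $|D|<\delta$ for the first bound: you use the combinatorial inequality $\gamma_h(G)\leq\gamma_{2\text{step}}(G)$ together with Theorem~\ref{2-step bound2}, whereas the paper compares the two analytic expressions directly via the chain $\frac{\ln\delta_h+1}{\delta_h}n>\frac{\ln(\delta_h+1)+1}{\delta_h+1}n\geq|D|$, implicitly using that $x\mapsto\frac{\ln x+1}{x}$ is decreasing for $x\geq 1$.
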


\begin{proof} In view of \cite[Theorem~12, p.926]{Henning1}, let $D$ be a hop dominating set of $G$ with
$$|D|\leq \dfrac{\ln(\delta_h+1)+1}{\delta_h+1}n.$$
Since $n<\ds\frac{\delta\delta_h}{\ln\delta_h+1}$, $\delta>\ds\frac{\ln\delta_h+1}{\delta_h}n$.
Let $u\in V(G)\setminus D$. Then
$$\deg_G(u)\geq\delta>\frac{\ln\delta_h+1}{\delta_h}n>\frac{\ln(\delta_h+1)+1}{\delta_h+1}n=|D|.$$
This means that there exists $v\in V(G)\setminus D$ such that $uv\in E(G)$. Thus, $D$ is a restrained hop dominating set of $G$. Accordingly, $$\gamma_{rh}(G)\leq\dfrac{\ln(\delta_h+1)+1}{\delta_h+1}n.$$
This proves the first part of the theorem.

Next, in view of Theorem~\ref{2-step bound2}, let $D^{(t)}$ be a 2-step dominating set of $G$ with
$$|D^{(t)}|\leq \dfrac{\ln\delta_h+1}{\delta_h}n.$$
Let $u\in V(G)\setminus D$. Then
$$\deg_G(u)\geq\delta>\frac{\ln\delta_h+1}{\delta_h}n=|D^{(t)}|.$$
This means that there exists $v\in V(G)\setminus D$ such that $uv\in E(G)$. Thus, $D^{(t)}$ is a total restrained hop dominating set of $G$. Accordingly, $$\gamma_{trh}(G)\leq\dfrac{\ln\delta_h+1}{\delta_h}n.$$
This completes the proof.
\end{proof}

\subsection{2-step restrained and total 2-step restrained domination numbers}

Given a graph $G$, a \emph{hop matching} $H$ in $G$ is a set of paths of size two such that no two paths share a common end vertex. A \emph{maximum hop matching} is a hop matching that contains the largest possible number of paths of size two. There may be many maximum hop matchings. The \emph{hop matching number} of a graph $G$, denoted by $\nu_h(G)$, is the cardinality of a maximum hop matching.

A \emph{perfect hop matching} or \emph{complete hop matching}, is a hop matching such that every vertex of the graph is an end vertex of an element of the hop matching.

A \emph{near-perfect hop matching}, or \textit{near-complete hop matching}, is a hop matching such that exactly one vertex of the graph is not an end vertex of an element of the hop matching.

\begin{theorem}
If $G$ is a graph of order $n$ with $\delta_h\geq 1$ and ${\rm Dist}(G;2)$ has no isolated vertex, then
$$\gamma_{2sr}(G)\leq\dfrac{2\ln(\delta_h+1)+\delta_h+3}{\delta_h+1}n-2\nu_h(G)$$ and
$$\gamma_{t2sr}(G)\leq\dfrac{2\ln\delta_h+\delta_h+2}{\delta_h}n-2\nu_h(G)$$
\end{theorem}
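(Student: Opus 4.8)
The plan is to mirror the proof of the restrained hop bound exactly, replacing the objects of distance one by objects of distance two throughout. I would first recall that a 2-step restrained dominating set $S$ requires (i) $S$ is hop dominating, and (ii) every $u\in V(G)\setminus S$ has some $v\in V(G)\setminus S$ with $d(u,v)=2$; and that a total 2-step restrained dominating set strengthens (i) to being a 2-step dominating set. The natural device is the hop matching: a hop matching $H=\{P_1,\ldots,P_{\nu_h}\}$ consists of paths of size two (so each $P_\ell$ is a pair of end vertices at distance two) with no shared end vertices, and $\nu_h:=\nu_h(G)=|H|$. These play precisely the role that ordinary edges of a matching $M$ played before, because an edge at distance two between two vertices lying outside the dominating set is exactly what condition (ii) demands.

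First I would take $D$ to be a minimum hop dominating set, so $|D|=\gamma_h(G)$, and a maximum hop matching $H$. As in the earlier proof, if a path of $H$ has neither end vertex in $D$ I would attempt to swap it for one having an end vertex in $D$, arranging that the first $k$ paths each have an end vertex in $D$ while the remaining $\nu_h-k$ paths have both end vertices in $V(G)\setminus D$; since $k\le|D|=\gamma_h(G)$ and (one needs) $\nu_h\ge\gamma_h(G)$, at least $\nu_h-\gamma_h(G)$ paths lie wholly outside $D$. Setting $D'=\{u\in V(G)\setminus D: u\text{ is not an end vertex of any of the }\nu_h-k\text{ chosen paths}\}$ and $S:=D\cup D'$, the set $S$ is 2-step restrained dominating: it is hop dominating because $D\subseteq S$, and any $u\notin S$ is an end vertex of one of the removed paths, whose other end vertex also lies outside $S$ and is at distance two from $u$, giving condition (ii). Counting,
\begin{eqnarray*}
\gamma_{2sr}(G)&\leq&|S|=n-2(\nu_h-k)\\
&\leq&n-2(\nu_h-\gamma_h(G))=n+2\gamma_h(G)-2\nu_h(G).
\end{eqnarray*}
Plugging in $\gamma_h(G)\le\frac{\ln(\delta_h+1)+1}{\delta_h+1}n$ from \cite[Theorem~12, p.926]{Henning1} yields the first bound. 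For the second I would instead start from a minimum 2-step dominating set $D^{(t)}$, run the identical construction to obtain a total 2-step restrained dominating set $S_t:=D^{(t)}\cup D'$ with $\gamma_{t2sr}(G)\le n+2\gamma_{2\text{step}}(G)-2\nu_h(G)$, and substitute $\gamma_{2\text{step}}(G)\le\frac{\ln\delta_h+1}{\delta_h}n$ from Theorem~\ref{2-step bound2}.

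The main obstacle I anticipate is the swapping step, which is genuinely more delicate here than in the matching case and where the hypothesis that ${\rm Dist}(G;2)$ has no isolated vertex must be used. In the distance-one proof, replacing an edge of $M$ both of whose endpoints lie outside $D$ by an edge meeting $D$ is justified by maximality of the matching; in the distance-two setting the corresponding swap must be carried out in ${\rm Dist}(G;2)$, so I would phrase the whole argument as an ordinary matching argument inside ${\rm Dist}(G;2)$, noting that a hop matching of $G$ is exactly a matching of ${\rm Dist}(G;2)$, that $\nu_h(G)=\nu({\rm Dist}(G;2))$, and that the no-isolated-vertex condition guarantees the relevant edges exist so that the exchange and the final cardinality count go through; the implicit assumption $\nu_h(G)\ge\gamma_h(G)$ also has to be invoked at the point where I pass from $k$ to $\gamma_h(G)$.
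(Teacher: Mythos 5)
Your proposal is correct and follows essentially the same route as the paper: a minimum hop (resp.\ 2-step) dominating set plus the vertices not covered by the $\nu_h-k$ wholly-outside paths of a maximum hop matching, the exchange argument, the count $n-2(\nu_h-k)\le n+2\gamma_h-2\nu_h$, and the substitution of the known bounds on $\gamma_h$ and $\gamma_{2\text{step}}$. Your closing observation that everything should be read as a matching argument in ${\rm Dist}(G;2)$ is exactly how the paper secures $\nu_h(G)\ge\gamma_h(G)$ from the no-isolated-vertex hypothesis (via the cited result of Henning, Pal, and Pradhan), so the step you flagged as delicate is handled the same way there.
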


\begin{proof} Let $D$ be a minimum hop dominating set of $G$. Then $|D|=\gamma_h(G)=:\gamma_h$. 
Let $H$ be a maximum hop matching of $G$. Then $\nu_h:=\nu_h(G)=|H|$. Let $H=\{P_2^1,P_2^2,\ldots,P_2^{\nu_h}\}$. If all of the end vertices of the elements in $H$ are not in $D$, we may replace an element of $H$ with a path of size two such that one of the end vertex is in $D$. Hence, we may assume that the first $k$ paths in $H$ have at least one end vertex contained in $D$. It follows that $\nu_h-k$ paths in $H$ have both end vertices contained in $V(G)\setminus D$. Since $k\leq |D|=\gamma_h(G)$, $\nu_h-\gamma_h\leq \nu_h-k$. Since ${\rm Dist}(G;2)$ has no isolated vertex, by \cite[Theorem~2, p.2314]{Henning2}, $\nu_h\geq \gamma_h$ so that at least $\nu_h-\gamma_h$ paths in $H$ have both end vertices contained in $V(G)\setminus D$. Let $D'=\{u\in V(G)\setminus D: \mbox{$u$ is not an end vertex of the $\nu_h-k$ paths in $H$}\}$. Let $S:=D\cup D'$. Then $S$ is a 2-step restrained dominating set of $G$. Now,
\begin{eqnarray*}
\gamma_{2sr}(G)&\leq&|S|\\
&=&|D\cup D'|\\
&=&n-2\lb \nu_h-k\rb\\
&\leq&n-2\lb\nu_h-\gamma_h\rb\\
&=&n+2\gamma_h-2\nu_h.
\end{eqnarray*}
In view of \cite[Theorem~12, p.926]{Henning1}, $\gamma_h\leq\dfrac{\ln(\delta_h+1)+1}{\delta_h+1}n$. Hence, 
$$\gamma_{2sr}(G)\leq\dfrac{2\ln(\delta_h+1)+\delta_h+3}{\delta_h+1}n-2\nu_h.$$
This proves the first part of the theorem.

Next, let $D^{(t)}$ be a minimum 2-step dominating set of $G$. Then $|D^{(t)}|=\gamma_{2\text{step}}(G):=\gamma_{2\text{step}}$. Using the same technique employed above, we can construct a total 2-step restrained dominating set $S_t:=D^{(t)}\cup D'$ of $G$ so that
\begin{eqnarray*}
\gamma_{t2sr}(G)&\leq&n+2\gamma_{2\text{step}}-2\nu_h.
\end{eqnarray*}
By Theorem~\ref{2-step bound2}, $\gamma_{2\text{step}}\leq\dfrac{\ln \delta_h+1}{\delta_h}n$. Hence, 
$$\gamma_{t2sr}(G)\leq\dfrac{2\ln \delta_h+\delta_h+2}{\delta_h}n-2\nu_h.$$
\end{proof}

\begin{corollary}\label{hop matching restrained hop}
If a graph $G$ of order $n$ has a perfect hop matching, then
$$\gamma_{2sr}(G)\leq\dfrac{\ln(\delta_h+1)+1}{\delta_h+1}2n$$ and
$$\gamma_{t2sr}(G)\leq\dfrac{\ln\delta_h+1}{\delta_h}2n.$$
\end{corollary}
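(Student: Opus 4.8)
The plan is to obtain this corollary as an immediate specialization of the preceding theorem, the only new ingredient being that a perfect hop matching pins down the hop matching number to $\nu_h(G) = n/2$. Once this value is substituted, both bounds drop out after a one-line simplification, exactly as Corollary~\ref{matching restrained hop} is deduced from its own preceding theorem using an ordinary perfect matching.

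First I would establish that $\nu_h(G) = n/2$. Let $H$ be a perfect hop matching. Each element of $H$ is a path of size two and thus has exactly two end vertices, and by definition no two elements of $H$ share a common end vertex; hence the $|H|$ elements contribute $2|H|$ pairwise distinct end vertices. Since $H$ is perfect, these end vertices exhaust $V(G)$, giving $2|H| = n$. As every hop matching has at most $n$ end vertices we get $\nu_h(G) \leq n/2$, and since $H$ attains this value, $\nu_h(G) = n/2$. The same observation shows the theorem's hypotheses hold: every vertex is an end vertex of some path of size two, so each vertex has a vertex at distance two from it, whence $\delta_h \geq 1$ and ${\rm Dist}(G;2)$ has no isolated vertex.

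It then remains to substitute $\nu_h(G) = n/2$, i.e.\ $2\nu_h(G) = n$, into the two bounds of the preceding theorem. For the first this gives
$$\gamma_{2sr}(G) \leq \frac{2\ln(\delta_h+1)+\delta_h+3}{\delta_h+1}\,n - n = \frac{2\ln(\delta_h+1)+2}{\delta_h+1}\,n = \frac{\ln(\delta_h+1)+1}{\delta_h+1}2n,$$
and for the second,
$$\gamma_{t2sr}(G) \leq \frac{2\ln\delta_h+\delta_h+2}{\delta_h}\,n - n = \frac{2\ln\delta_h+2}{\delta_h}\,n = \frac{\ln\delta_h+1}{\delta_h}2n.$$
I do not anticipate any real obstacle here: the only step that deserves care is the counting argument yielding $\nu_h(G) = n/2$, which hinges on the defining property that distinct paths of a hop matching have disjoint end-vertex sets; the remaining algebra is entirely routine.
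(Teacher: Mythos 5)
Your proposal is correct and matches the paper's intended argument: the paper states this corollary without proof as an immediate consequence of the preceding theorem, obtained by substituting $2\nu_h(G)=n$ (which your counting argument justifies, along with verifying that the perfect hop matching forces $\delta_h\geq 1$ and that ${\rm Dist}(G;2)$ has no isolated vertex, so the theorem applies). The algebraic simplifications check out exactly as you wrote them.
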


\begin{corollary}
If a graph $G$ of order $n$ has a near-perfect hop matching, then
$$\gamma_{2sr}(G)\leq\dfrac{\ln(\delta_h+1)+1}{\delta_h+1}2n+1$$ and
$$\gamma_{t2sr}(G)\leq\dfrac{\ln\delta_h+1}{\delta_h}2n+1$$
\end{corollary}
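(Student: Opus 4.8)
The plan is to obtain this corollary as an immediate specialization of the preceding theorem, by substituting the value of $\nu_h(G)$ forced by the near-perfect hop matching hypothesis. First I would unwind the definition: a near-perfect hop matching leaves exactly one vertex of $G$ outside the set of end vertices of its paths of size two, so the remaining $n-1$ vertices split into pairs of end vertices. Hence the matching consists of $(n-1)/2$ paths of size two, which gives $\nu_h(G)=(n-1)/2$, equivalently $2\nu_h(G)=n-1$. (This also forces $n$ to be odd, consistent with the earlier remark that a near-perfect matching requires an odd number of vertices.)

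Next I would verify the hypotheses of the preceding theorem. The bounds implicitly require $\delta_h\geq 1$, since they involve $\ln\delta_h$ and $\delta_h$ in a denominator; under this assumption ${\rm Dist}(G;2)$ has no isolated vertex, so the theorem applies and yields
$$\gamma_{2sr}(G)\leq\dfrac{2\ln(\delta_h+1)+\delta_h+3}{\delta_h+1}n-2\nu_h(G),\qquad \gamma_{t2sr}(G)\leq\dfrac{2\ln\delta_h+\delta_h+2}{\delta_h}n-2\nu_h(G).$$

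Then I would substitute $2\nu_h(G)=n-1$ and simplify each right-hand side. For the first bound I would write $n=\frac{(\delta_h+1)n}{\delta_h+1}$ and combine over the denominator $\delta_h+1$; the linear term $\delta_h$ cancels and the constant $3$ drops to $2$, so that $\frac{2\ln(\delta_h+1)+\delta_h+3}{\delta_h+1}n-(n-1)=\frac{\ln(\delta_h+1)+1}{\delta_h+1}2n+1$. For the second bound, combining over the denominator $\delta_h$ cancels the linear term $\delta_h$ in the numerator and leaves $\frac{2\ln\delta_h+2}{\delta_h}n+1=\frac{\ln\delta_h+1}{\delta_h}2n+1$. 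These are exactly the two claimed inequalities.

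This is the near-perfect analogue of Corollary~\ref{hop matching restrained hop}, in which the perfect matching case gives $2\nu_h(G)=n$ and therefore produces no additive $+1$. I do not expect a substantive obstacle here: the only point needing care is the standing requirement $\delta_h\geq 1$ (so that ${\rm Dist}(G;2)$ is isolated-free and the preceding theorem may be invoked), while everything else is the routine cancellation displayed above.
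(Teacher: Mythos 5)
Your proposal is correct and matches the paper's intended derivation: the corollary is stated without proof precisely because it follows from the preceding theorem by substituting $2\nu_h(G)=n-1$ (forced since the $n-1$ matched end vertices pair up, making $n$ odd), and your algebraic simplification of both bounds checks out. Your side remarks --- that $\delta_h\geq 1$ is the implicit standing hypothesis making the bounds meaningful and ensuring ${\rm Dist}(G;2)$ has no isolated vertex --- are consistent with how the paper treats the perfect-matching analogue.
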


Next, we improve the assumption in Corollary~\ref{hop matching restrained hop}.
Let $f_{2sr}:C^n\to\R$ be the function defined by
\begin{eqnarray*}
f_{2sr}(\textbf{\textit{p}})&=&\sum_{i=1}^n p_i +\sum_{i=1}^n (1-p_i)\ds\prod_{j\in N_2(i)}(1-p_j)\\
&&\quad+\sum_{i=1}^n (1-p_i)\lp 1-(1-p_i)\ds\prod_{j\in N_2(i)}(1-p_j)\rp\\
&&\quad\quad\times\ds\prod_{j\in N_2(i)}\lb p_j+(1-p_j)\ds\prod_{k\in N_2(j)}(1-p_k)\rb.
\end{eqnarray*}
The proof of the following result is similar with Theorem~\ref{restrained hop bound} with $Y=\{i\notin X\cup Z: N(i)\subseteq X\cup Z\}$ replaced with $Y=\{i\notin X\cup Z: N_2(i)\subseteq X\cup Z\}$.

\begin{theorem}\label{2-step restrained bound}
If $G$ is a graph of order $n$, then 
$$\gamma_{2sr}(G)=\ds\min_{\textbf{\textit{p}}\in C^n} f_{2sr}(\textbf{\textit{p}})$$
\end{theorem}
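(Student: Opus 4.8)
The plan is to prove Theorem~\ref{2-step restrained bound} by mimicking, nearly verbatim, the two-direction argument of Theorem~\ref{restrained hop bound}, with the single structural change flagged by the authors: redefine the set $Y$ so that membership is governed by the distance-two neighborhood $N_2$ rather than the ordinary neighborhood $N$. Concretely, I would pick each vertex $i\in V$ independently into a random set $X$ with probability $p_i$, set $Z=\{i\notin X: N_2(i)\cap X=\varnothing\}$ exactly as before, and then define $Y=\{i\notin X\cup Z: N_2(i)\subseteq X\cup Z\}$. The candidate set is $D=X\cup Z\cup Y$.

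The first step is to verify that $D$ is a \emph{2-step restrained} dominating set of $G$, i.e. that every $u\in V\setminus D$ has both a vertex of $D$ at distance $2$ (hop domination) and a vertex of $V\setminus D$ at distance $2$ (the restraint condition, now at distance two instead of one). Let $u\in V\setminus D$. Since $u\notin Z$ we get $N_2(u)\cap X\neq\varnothing$, so there is $v\in X\subseteq D$ with $d_G(u,v)=2$, giving hop domination. Since $u\notin Y$ we get $N_2(u)\nsubseteq X\cup Z$, so there exists $s\in N_2(u)$ with $s\notin X\cup Z$; if $s\in Y$ then $N_2(s)\subseteq X\cup Z$, but $u\in N_2(s)$ forces $u\in X\cup Z$, a contradiction, so $s\in V\setminus D$ with $d_G(u,s)=2$. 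This is the distance-two analogue of the membership-in-$Y$ contradiction in Theorem~\ref{restrained hop bound}, and it is the one place where switching $N$ to $N_2$ must be checked carefully rather than copied.

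The second step is the expectation computation. Introducing indicators $X_i,Z_i,Y_i$ as in the earlier proof, I have $\E[X_i]=p_i$ and $\E[Z_i]=(1-p_i)\prod_{j\in N_2(i)}(1-p_j)$ exactly as before, since $Z$ is unchanged. For $Y_i$, the event $i\in Y$ requires $i\notin X$, then $i\notin Z$ (probability $1-(1-p_i)\prod_{j\in N_2(i)}(1-p_j)$), and finally that every $j\in N_2(i)$ lies in $X\cup Z$; by independence the last factor becomes $\prod_{j\in N_2(i)}\bigl[p_j+(1-p_j)\prod_{k\in N_2(j)}(1-p_k)\bigr]$, which is precisely the product over $N_2(i)$ appearing in $f_{2sr}$. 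Summing, $\E[|D|]=f_{2sr}(\textbf{\textit{p}})$, so a set of size at most $f_{2sr}(\textbf{\textit{p}})$ exists, yielding $\gamma_{2sr}(G)\le\min_{\textbf{\textit{p}}\in C^n}f_{2sr}(\textbf{\textit{p}})$.

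For the reverse inequality I would take a minimum 2-step restrained dominating set $D'$, set $\textbf{\textit{p}}'$ to be its $0/1$ indicator vector, and observe that the second and third sums in $f_{2sr}$ vanish term-by-term (each contains a factor $1-p_i'=0$ for $i\in D'$, and for $i\notin D'$ the factor $\prod_{j\in N_2(i)}(1-p_j')$ and the bracketed product conspire to kill those terms), so $f_{2sr}(\textbf{\textit{p}}')=\sum_i p_i'=|D'|=\gamma_{2sr}(G)$; combined with the minimization this gives equality. I expect the main obstacle to be the independence bookkeeping in the $\E[Y_i]$ factor: one must confirm that conditioning on $i\notin X$ does not disturb the events $\{j\in X\cup Z\}$ for $j\in N_2(i)$ except through the already-accounted factor, exactly as in the hop case, since $i\notin N_2(i)$ and the $N_2(j)$ may or may not contain $i$. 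The authors' phrasing ``the proof is similar'' suggests they treat this as routine; the honest work is checking that the distance-two restraint argument in Step~1 genuinely closes, which I have sketched above.
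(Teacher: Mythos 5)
Your proposal is correct and is exactly the proof the paper intends: the paper states that Theorem~\ref{2-step restrained bound} is proved as Theorem~\ref{restrained hop bound} with $Y=\{i\notin X\cup Z: N(i)\subseteq X\cup Z\}$ replaced by $Y=\{i\notin X\cup Z: N_2(i)\subseteq X\cup Z\}$, and your two-direction argument (random set $D=X\cup Z\cup Y$, the distance-two restraint verification via the symmetry $u\in N_2(s)\iff s\in N_2(u)$, the expectation computation giving $f_{2sr}$, and the $0/1$ indicator vector of a minimum 2-step restrained dominating set for the reverse inequality) carries out precisely that substitution. Your verification that the membership-in-$Y$ contradiction still closes at distance two is the one point needing care, and you handle it correctly.
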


\begin{theorem}\label{2-step restrained bound2}
If $G$ is a graph of order $n$ with $\delta_h\geq 1$, then
$$\gamma_{2sr}(G)\leq\dfrac{\ln(\delta_h+1)+1}{\delta_h+1}2n.$$
\end{theorem}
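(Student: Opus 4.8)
The plan is to mimic the proof of Theorem~\ref{restrained hop bound2} almost verbatim, the only structural change being that the outer product defining $f_{2sr}$ ranges over the second neighborhood $N_2(i)$ rather than over $N(i)$. By Theorem~\ref{2-step restrained bound} we have $\gamma_{2sr}(G)=\min_{\mathbf{p}\in C^n}f_{2sr}(\mathbf{p})$, so it suffices to exhibit a single probability vector $\mathbf{p}$ for which $f_{2sr}(\mathbf{p})$ does not exceed the claimed bound. First I would take the constant vector $\mathbf{p}=(p,\ldots,p)$ with $p\in[0,1]$ to be fixed later. Under this substitution $\prod_{j\in N_2(i)}(1-p_j)$ collapses to $(1-p)^{d_h(i)}$, each factor of the inner product becomes $p+(1-p)^{d_h(j)+1}$, and the whole third summand turns into $\sum_{i=1}^n(1-p)\bigl(1-(1-p)^{d_h(i)+1}\bigr)\prod_{j\in N_2(i)}\bigl[p+(1-p)^{d_h(j)+1}\bigr]$.

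Next I would bound this expression term by term using $d_h(i)\ge\delta_h$ together with $\delta_h\ge1$. The crucial point is that the inner product in the third summand now has exactly $d_h(i)$ factors (not $\deg(i)$), and each factor $p+(1-p)^{d_h(j)+1}$ is at most $p+(1-p)^{\delta_h+1}\le1$; hence the product is at most $\bigl(p+(1-p)^{\delta_h+1}\bigr)^{d_h(i)}\le\bigl(p+(1-p)^{\delta_h+1}\bigr)^{\delta_h}\le p+(1-p)^{\delta_h+1}$, where the last inequality uses $\delta_h\ge1$. Discarding the factor $1-(1-p)^{d_h(i)+1}\le1$ and summing over $i$, the second and third summands are controlled exactly as in Theorem~\ref{restrained hop bound2}, yielding
$$f_{2sr}(\mathbf{p})\le np+n(1-p)^{\delta_h+1}+np(1-p)+n(1-p)^{\delta_h+2}\le 2np+2n(1-p)^{\delta_h+1}\le 2np+2ne^{-p(\delta_h+1)},$$
where the final step applies $1-x\le e^{-x}$.

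Finally I would optimize the one-variable function $g(p)=2np+2ne^{-p(\delta_h+1)}$ on $[0,1]$ exactly as before: its unique critical point is $p'=\tfrac{\ln(\delta_h+1)}{\delta_h+1}$, which lies in $(0,1)$ since $\delta_h\ge1$ forces $0<\tfrac{\ln(\delta_h+1)}{\delta_h+1}<1$, and substituting gives $g(p')=\tfrac{\ln(\delta_h+1)+1}{\delta_h+1}2n$. Applying Theorem~\ref{2-step restrained bound} at $\mathbf{p}'=(p',\ldots,p')$ then yields the claimed inequality. I do not expect any serious obstacle, as the optimization is identical to that of Theorem~\ref{restrained hop bound2}. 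The one point requiring genuine care — and the place where the argument actually differs — is the bookkeeping in the third summand: one must verify that the exponent controlling the inner product is the hop degree $d_h(i)\ge\delta_h$ rather than the ordinary degree. This is precisely why the hypothesis $\delta_h\ge1$ (guaranteeing $N_2(i)\neq\varnothing$ and an exponent at least $1$) is exactly what is needed, and why no assumption on $\delta$ enters the bound.
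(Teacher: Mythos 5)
Your proposal is correct and is exactly the argument the paper intends: the paper states this theorem without proof, clearly as the analogue of Theorem~\ref{restrained hop bound2}, and you carry out that analogue faithfully, including the one genuinely new point --- that the outer product now has $d_h(i)\ge\delta_h\ge 1$ factors rather than $\deg(i)\ge\delta$ factors, so $\bigl(p+(1-p)^{\delta_h+1}\bigr)^{d_h(i)}\le p+(1-p)^{\delta_h+1}$ follows from $\delta_h\ge 1$ alone and no hypothesis on $\delta$ is needed. The subsequent bounding $f_{2sr}(\mathbf{p})\le 2np+2ne^{-p(\delta_h+1)}$ and the optimization at $p'=\ln(\delta_h+1)/(\delta_h+1)$ match the paper's computation verbatim.
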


Similar with section~\ref{restrained hop section}, the upper bound in Theorem~\ref{restrained hop bound2} can be improved but we need to put additional assumption.

\begin{theorem}
If $G$ be a graph of order $n$ with $\delta_h\geq 1$ and $n<\ds\frac{\delta_h^2}{\ln\delta_h+1}$, then
$$\gamma_{2sr}(G)\leq\dfrac{\ln(\delta_h+1)+1}{\delta_h+1}n$$ and
$$\gamma_{t2sr}(G)\leq\dfrac{\ln\delta_h+1}{\delta_h}n.$$
\end{theorem}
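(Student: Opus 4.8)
The plan is to mirror the argument of the preceding theorem (the one improving Corollary~\ref{matching restrained hop} under the hypothesis $n<\ds\frac{\delta\delta_h}{\ln\delta_h+1}$), but to replace the ordinary degree $\deg_G$ by the hop degree $d_h$ throughout. The reason is structural: a $2$-step restrained dominating set requires, for each vertex outside the set, a restraining witness at distance $2$ rather than at distance $1$, so the quantity that must dominate $|D|$ is the hop degree and not the ordinary degree. First I would extract a small hop dominating set: by \cite[Theorem~12, p.926]{Henning1} there is a hop dominating set $D$ of $G$ with $|D|\leq\dfrac{\ln(\delta_h+1)+1}{\delta_h+1}n$. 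The goal is then to show that this very same $D$ is automatically $2$-step restrained once $n$ is small enough.

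Next I would unpack the hypothesis $n<\ds\frac{\delta_h^2}{\ln\delta_h+1}$ by clearing denominators into the equivalent form $\delta_h>\dfrac{\ln\delta_h+1}{\delta_h}n$. Combining this with the monotonicity of $h(x)=\dfrac{\ln x+1}{x}$, whose derivative $h'(x)=-\dfrac{\ln x}{x^{2}}$ is nonpositive for $x\geq 1$ so that $\dfrac{\ln\delta_h+1}{\delta_h}\geq\dfrac{\ln(\delta_h+1)+1}{\delta_h+1}$, I obtain for every $u\in V(G)\setminus D$ the chain $d_h(u)\geq\delta_h>\dfrac{\ln\delta_h+1}{\delta_h}n\geq\dfrac{\ln(\delta_h+1)+1}{\delta_h+1}n\geq|D|$. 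Since $u$ therefore has strictly more than $|D|$ vertices at distance $2$ from it, a pigeonhole count shows $N_2(u)\setminus D\neq\varnothing$, giving a distance-$2$ witness outside $D$. Hence $D$ is a $2$-step restrained dominating set and the first bound $\gamma_{2sr}(G)\leq\dfrac{\ln(\delta_h+1)+1}{\delta_h+1}n$ follows.

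For the second bound I would run the same scheme starting instead from a $2$-step dominating set. By Theorem~\ref{2-step bound2} there is a $2$-step dominating set $D^{(t)}$ with $|D^{(t)}|\leq\dfrac{\ln\delta_h+1}{\delta_h}n$, and the same manipulation of the hypothesis yields $d_h(u)\geq\delta_h>\dfrac{\ln\delta_h+1}{\delta_h}n\geq|D^{(t)}|$ for each $u\in V(G)\setminus D^{(t)}$. The identical pigeonhole step on the $d_h(u)$ vertices at distance $2$ from $u$ forces a distance-$2$ witness outside $D^{(t)}$, so $D^{(t)}$ is total $2$-step restrained and $\gamma_{t2sr}(G)\leq\dfrac{\ln\delta_h+1}{\delta_h}n$.

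The main point requiring care, in an otherwise routine transfer of the previous proof, is recognizing that the hop degree $\delta_h$ (rather than $\delta$) is the correct quantity to compare against $|D|$; this is exactly why the hypothesis tightens from $n<\ds\frac{\delta\delta_h}{\ln\delta_h+1}$ to $n<\ds\frac{\delta_h^2}{\ln\delta_h+1}$. Once this substitution is made, the distance-$2$ restraining condition is guaranteed by the same counting argument as before, and the monotonicity inequality $\dfrac{\ln\delta_h+1}{\delta_h}\geq\dfrac{\ln(\delta_h+1)+1}{\delta_h+1}$, valid for all $\delta_h\geq 1$, closes the estimate. I would also verify the boundary case $\delta_h=1$, where $\ln\delta_h+1=1$ and the first bound reads $n$, to confirm that the strict inequality in the chain persists there.
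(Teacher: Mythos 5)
Your proposal is correct and takes essentially the same approach as the paper: the paper gives no separate proof for this theorem, stating only that it is ``similar'' to the preceding restrained-hop result with hypothesis $n<\frac{\delta\delta_h}{\ln\delta_h+1}$, and your argument is exactly that proof with $\deg_G$ and $\delta$ replaced by the hop degree $d_h$ and $\delta_h$ --- extracting $D$ from \cite[Theorem~12, p.926]{Henning1} (resp.\ $D^{(t)}$ from Theorem~\ref{2-step bound2}) and using the chain $d_h(u)\geq\delta_h>\frac{\ln\delta_h+1}{\delta_h}n\geq\frac{\ln(\delta_h+1)+1}{\delta_h+1}n\geq|D|$ together with a pigeonhole on $N_2(u)$ to produce the distance-$2$ witness outside the set. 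Your explicit monotonicity verification for $h(x)=\frac{\ln x+1}{x}$ merely makes precise an inequality the paper asserts without comment, so there is no substantive divergence.
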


\section{Conclusion and Recommendations}
This paper has introduced the concepts of restrained hop, total restrined hop, 2-step restrained, and total 2-step restrained dominating sets and constructed their corresponding IP formulations. Sharp upper bounds has also been provided using probabilistic methods. A worthwhile direction for further study is to consider the complexity of these domination parameters.


\end{document}